\newtheorem{theorem}{Theorem}%[section]
 \newtheorem{thm}[theorem]{Theorem}
 \newtheorem{lem}[theorem]{Lemma}
 \newtheorem{lemma}[theorem]{Lemma}
 \newtheorem{cor}[theorem]{Corollary}
 \newtheorem{prop}[theorem]{Proposition}
 \newtheorem{rem}[theorem]{Remark}
 \newtheorem{defn}[theorem]{Definition}
 \def\Bbb{\mathbb}
 \def\Z{\Bbb{Z}}
 \def\bZ{\Bbb{Z}}
 \def\Q{\Bbb{Q}}
\newcommand{\ba}{\begin{array}}
\newcommand{\ea}{\end{array}}
\newcommand{\be}{\begin{enumerate}}
\newcommand{\ee}{\end{enumerate}}
\newcommand{\bi}{\begin{itemize}}
\newcommand{\ei}{\end{itemize}}
\newcommand{\bc}{\begin{center}}
\newcommand{\ec}{\end{center}}
\newcommand{\bt}{\begin{tabular}}
\newcommand{\et}{\end{tabular}}
 \def\isom{\cong}
 \def\onto{{\kern3pt\to\kern-8pt\to\kern3pt}}
 \def\<{\langle}
 \def\>{\rangle}
 \def\|{{\ |\ }}
 \def\rank{\mbox{rank}}
 \def\ov{\overline}
\title[Decomposability of nilpotent groups]
{Decomposability of finitely generated torsion-free nilpotent groups}
\date{\today}
\author[Baumslag]{Gilbert Baumslag}\address{Gilbert Baumslag\\
Department of Mathematics and Computer Science\\
City College of New York\\
Convent Ave. and 138th Street\\
New York, N.Y. 10031\\USA}
\author[Miller]{Charles~F. Miller~III }
\address{ Charles~F. Miller~III\\
Department of Mathematics and Statistics\\
University of Melbourne\\
Melbourne 3010 \\Australia }
\email{c.miller@ms.unimelb.edu.au }
\author[Ostheimer]{Gretchen~Ostheimer}
\address{ Gretchen~Ostheimer\\
Department of Computer Science\\
211 Adams Hall\\
Hofstra University \\
Hempstead NY 11549 \\USA}
\email{gretchen.ostheimer@hofstra.edu}
\begin{document}
%%%%%%%%%

\begin{abstract}
We describe an algorithm for deciding whether or not 
a given finitely generated torsion-free nilpotent group 
is decomposable as the direct product of nontrivial subgroups.
\end{abstract}

\keywords{nilpotent groups, direct decomposition, algorithm}

\subjclass[2010]{20F10, 20F18, 17B30}

\maketitle

\date{\today}

\section{Introduction}

Finitely generated nilpotent groups seem tractable from some points of view.  
Such a nilpotent group $G$ is finitely presented, and the elements of finite order form 
a finite normal subgroup $T$ with torsion-free quotient $G/T$.  Moreover many algorithmic
problems have positive solutions for finitely generated nilpotent groups.
For example, the word and conjugacy problems can be solved in a number of ways.  
Perhaps most remarkably,  Grunewald and Segal \cite{GS} have solved the isomorphism problem for finitely generated nilpotent groups.   

In this paper we address a still open decidability question for these groups, raised by Baumslag in \cite{Gilbert:open}: determine 
whether a nilpotent group given by a finite presentation has a nontrivial direct product
decomposition.  We show that such an algorithm exists for the subclass of torsion-free
finitely generated nilpotent groups.

Two common algorithmic approaches are (1) using residual properties and (2) using 
a polycyclic series inductively.  So the conjugacy problem for nilpotent groups can be solved 
by showing such groups are conjugacy separable, that is, non-conjugate elements remain non-conjugate in some finite quotient.  Enumeration arguments then provide an algorithm to determine conjugacy.    The second approach also gives algorithms solving a wide variety of problems for nilpotent and polycyclic groups (\cite{BCRS},\cite{BCM1},\cite{BCM2}) often using an effective version of the Hilbert basis theorem.   

There are some known difficulties with nilpotent groups.  Remeslennikov \cite{Remeslennikov:finiteQuot} constructs non-isomorphic finitely presented nilpotent groups which 
have the same collection of finite quotient groups.   Perhaps more ominously, Remesennikov
\cite{Remeslennikov:epi} shows that while one can determine whether one nilpotent group embeds in another, there is no algorithm to determine whether one is a quotient of another.  He shows 
Hilbert's tenth problem is reducible to this epimorphism problem. 

Moreover, the Remak-Krull-Schmidt theorem fails for finitely generated nilpotent groups,
because direct product decompositions, when they do exist, are far from unique: in \cite{Baumslag:nilpDecomp}, Baumslag shows that for any pair of integers $m,n > 1$, it is possible to construct a single torsion-free nilpotent group with two different direct product decompositions, one with $m$ indecomposable factors, the other with $n$ indecomposable factors, where no factor in the first decomposition is isomorphic to any factor of the second decomposition. 
An analysis of Baumslag's non-uniqueness examples led  us to the following 
theorem. 

\newtheorem*{thm:a}{Theorem \ref{thm:a}}
\begin{thm:a}  There is an algorithm to determine of an arbitrary finite presentation of 
a torsion-free nilpotent group $G$ whether or not $G$ has an abelian direct factor.
If so, the algorithm expresses $G$ as $G \isom G_1 \times \Z^n$ where $G_1$
has no nontrivial abelian direct factor.
\end{thm:a}

In Section \ref{examples} we illustrate how 
the existence of abelian direct factors can be a source of non-uniqueness.
The algorithm of Theorem \ref{thm:a} combines some elementary considerations
with several known algorithms for presenting subgroups of abelian and nilpotent groups.
Making progress in the absence of abelian direct factors involves more elaborate
methods.  We rely on properties of the rational closure (Malcev completion) of torsion-free nilpotent groups 
and use uniqueness of decomposition results for rational Lie algebras.
Our result is the following:

\newtheorem*{thm:b}{Theorem \ref{thm:b}}
\begin{thm:b} There is an algorithm to determine of an arbitrary finite presentation of 
a torsion-free nilpotent group $G$ without abelian direct factors, 
whether or not $G$ has a nontrivial direct decomposition.
If so, the algorithm expresses $G$ as $G \isom G_1 \times\ldots  \times G_n$ where 
each $G_i$ is directly indecomposable.
\end{thm:b}

Our paper is structured as follows.
In Section \ref{background} we present some background material about the rational closures of finitely generated torsion-free nilpotent groups. We believe that these results are probably well-known, but since we have not been able to find references, we include proofs here. 
In Section \ref{abelianSection} we prove Theorem \ref{thm:a}.
In Section \ref{nonabelianSection} we present some structural theorems that describe the relationship between the myriad decompositions of a torsion-free nilpotent group and the more constrained decompositions of its rational closure and we use these to prove Theorem \ref{thm:b}.
In Section \ref{examples} we use the examples from \cite{Baumslag:nilpDecomp} to illustrate our algorithm. 

We leave three obvious questions unanswered. First, can our result be extended to include groups with torsion? Second, is the algorithm presented here practical; that is, is it possible to implement this algorithm (or a variant of it) in such a way that the algorithm can be used to determine the decomposability (and also to find a decomposition) in reasonably complex examples?
Third, if a finitely generated torsion-free nilpotent group does not have any nontrivial abelian factors, 
is its decomposition as a direct product of directly indecomposable groups unique up to isomorphism?

{\it  In memoriam Gilbert Baumslag:}  This work  results from discussions among the authors at various times, particularly during July and August of 2014.  In September of that year Gilbert was diagnosed with incurable pancreatic cancer and he died on 20 October.  His passing was of great sadness to us and to his many friends and colleagues.   Gilbert's contributions to group theory
were vast, he enjoyed sharing ideas and collaborated widely,  and he gave assistance generously
to students and younger colleagues.  We miss him greatly.

\section{Background material about the rational closure}
\label{background}

In this section we gather together results about the rational closures of finitely generated torsion-free nilpotent groups. 
We suspect that  all of the results presented here are well-known. For those results for which we have been unable to find references, we include our own proofs. 

For every finitely generated torsion-free nilpotent group $G$, there exists a torsion-free nilpotent group
$\overline{G}$  satisfying the following properties:
\begin{itemize}
\item $G$ embeds in $\overline{G}$;
\item for all $h \in \overline{G}$ and for all positive integers $\alpha$, there exists a unique
element $k \in \overline{G}$ such that $k^\alpha = h$;
\item for all $h \in \overline{G}$ there exists a positive integer $\alpha$ such that $h^\alpha \in G$.
\end{itemize}
$\overline{G}$ is unique up to isomorphism and it is called the {\em rational closure} of $G$
(see Chapter 6 in \cite{Segal:book}).

In order to understand the relationship between the direct product decompositions 
of $G$ and those of $\overline{G}$, we need two straightforward results:
first, a direct decomposition of $G$ gives rise to a direct decomposition of $\overline{G}$;
second, the well-known theorem regarding the uniqueness of direct sum decompositions of Lie algebras
can be reframed to give a useful description of the uniqueness of the direct product decompositions of $\overline{G}$.
There are a number of ways to approach these proofs. 
Here we  choose to exploit the fact that our groups can be represented by unitriangular matrices with 
integer entries and that in this context we can use the logarithm map to embed our groups in 
a finite dimensional Lie algebra. (This approach is described in \cite{Segal:book}, for example.)
The reader who is willing to accept Proposition \ref{easyDirection}
and Proposition \ref{rationalUnique} below can skip to Section \ref{abelianSection}.

For ring $S = \Z, \Q$ and for $m = 0, 1 \in S$, we let 
$Tr_m(r, S)$ denote the set of $r \times r$ upper-triangular matrices with entries in $S$ and $m$'s on the diagonal.
Every finitely generated torsion-free nilpotent group
can be embedded in the group $Tr_1(r, \Z)$
for a suitably chosen $r$ (see, for example, Chapter $5$ in \cite{Segal:book}). 
Some of the proofs here will be easy 
using such a matrix representation, so we will assume that our given group is a subgroup
of $Tr_1(r, \Z)$ whenever it is convenient to do so. 

Recall that for $x \in Tr_1(r, \Q)$, $\log(x)$ is defined by 
$\log(x) = u - \frac{1}{2}u^2 + \frac{1}{3}u^3 - \cdots$,
where $u = x-1$. 
For $u \in Tr_0(r, \Q)$, $\exp(u)$ is defined by
$\exp(u) = 1 + u + \frac{1}{2!}u^2 + \frac{1}{3!}u^3 + \cdots$. 
In both cases, since $u^r = 0$, the indicated sum is finite. 

The following standard properties of $\log$ and $\exp$ can
be found in \cite{Segal:book}, for example. 
%{\bf used in logExmComm}
\begin{rem}
\label{logExpInv}
For all $x \in Tr_1(r, \Q)$ and all $u \in Tr_0(r, \Q)$,
$\exp(\log(x)) = x$ and $\log(\exp(u)) = u$.
\end{rem}

%{\bf used in easyDirection}
\begin{rem}
\label{logOfPower}
 For all $x \in Tr_1(r, \Q)$ and all non-negative integers $n$, $\log(x^n) = n\log(x)$ .
 \end{rem}
 
The $\log$ and $\exp$ maps can be used to construct $\overline{G}$ as follows (see \cite{Segal:book} for example). 
\begin{prop}
Let $G$ be a subgroup of $Tr_1(r, \Z)$. 
Let $L$ be the vector space of $Tr_0(r, \Q)$ generated by $\{ \log(g) \; | \; g \in G\}$.
Let $H = \exp(L)$. 
Then $H$ is the rational closure of $G$. 
\end{prop}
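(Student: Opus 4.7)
I will verify that $H$ satisfies the three defining properties of the rational closure stated at the beginning of the section. The technical engine throughout is the Baker--Campbell--Hausdorff (BCH) formula, which in the nilpotent setting terminates as a finite Lie polynomial, together with its Dynkin-style inverse expressing $[u,v]$ as a rational combination of iterated $\log$s of group commutators. Applying Dynkin to $u = \log g$, $v = \log g'$ with $g,g' \in G$, and using that $G$ is closed under group commutation, one sees (by induction on weight along the lower central series of $Tr_0(r,\Q)$) that $L$ is closed under the Lie bracket. Since $L$ is a $\Q$-vector space by construction, it is a $\Q$-Lie subalgebra, and BCH then immediately gives that $H = \exp(L)$ is a subgroup of $Tr_1(r,\Q)$.

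With $H$ known to be a group, two of the three axioms are easy. The inclusion $G \subseteq H$ follows from $g = \exp(\log g)$ with $\log g \in L$. For the existence and uniqueness of $\alpha$-th roots, given $h = \exp(\ell) \in H$ and $\alpha \in \Z_{>0}$, the element $k := \exp(\ell/\alpha)$ lies in $H$ because $L$ is a $\Q$-vector space, and satisfies $k^\alpha = h$ by Remark~\ref{logOfPower}; uniqueness follows from the injectivity of $\log$ on $Tr_1(r,\Q)$ (Remark~\ref{logExpInv}), since $k_1^\alpha = k_2^\alpha$ forces $\alpha \log k_1 = \alpha \log k_2$ and hence $\log k_1 = \log k_2$.

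The main obstacle is the third property: every $h \in H$ must have a positive power in $G$. My plan is to fix a Malcev basis $g_1, \dots, g_n$ for $G$ adapted to the lower central series, and to argue (by a second induction along this series, using BCH) that $\log g_1, \dots, \log g_n$ is a $\Q$-basis of $L$, so in particular $\dim_\Q L = n$. Every $h \in H$ can then be written in Malcev normal form $h = g_1^{q_1} \cdots g_n^{q_n}$ with $q_i \in \Q$, where $g_i^{q_i} := \exp(q_i \log g_i) \in H$ by the unique divisibility already established; the surjectivity of the Malcev coordinate map $\Q^n \to H$ is the delicate step and can be handled either by a dimension count comparing $\exp\colon L \to H$ with the coordinate parametrisation, or by induction on the nilpotency class, quotienting by the centre. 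Once this normal form is in hand, the Malcev correspondence supplies polynomials $P_1, \dots, P_n$ with rational coefficients such that $(g_1^{q_1} \cdots g_n^{q_n})^\alpha = g_1^{P_1(\alpha, q)} \cdots g_n^{P_n(\alpha, q)}$, and choosing $\alpha$ to clear all denominators appearing in the $q_i$ and in the coefficients of the $P_i$ makes every exponent an integer, placing $h^\alpha$ in $G$.
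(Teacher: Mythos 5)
The paper does not actually prove this proposition --- it is stated as a known construction with a pointer to \cite{Segal:book} --- so there is no internal proof to compare against. Your outline is the standard argument from that source: bracket-closure of $L$ via the inverse Baker--Campbell--Hausdorff formula together with a downward induction on weight, unique divisibility of $H=\exp(L)$ from the $\Q$-vector-space structure of $L$ and injectivity of $\log$, and the third axiom via Malcev coordinates and polynomiality of the power map (where the key point, which you should make explicit, is that $P_i(0,q)=0$, so every monomial of $P_i$ carries a positive power of $\alpha$ and a single $\alpha$ chosen as a common denominator really does make all exponents integral). The plan is correct; just note that the two steps you defer --- bracket-closure of the $\Q$-span of $\log G$ and surjectivity of the coordinate map $\Q^n\to H$ --- are where essentially all the work lies, so as written this is a sound blueprint rather than a finished proof.
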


%{\bf used in two propositions in this section}
\begin{prop}
\label{logExpComm}
Let $x_1,x_2 \in Tr_1(r, \Q)$, $u_1, u_2 \in Tr_0(r, \Q)$. 
Then $x_1$ and $x_2$ commute if and only if $\log(x_1)$ and $log(x_2)$ commute.
Likewise $u_1$ and $u_2$ commute if and only if $\exp(u_1)$ and $\exp(u_2)$ commute.
\end{prop}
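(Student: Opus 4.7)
The plan is to exploit a single elementary algebraic fact: both $\log$ and $\exp$, when restricted to the relevant sets, are given by polynomial expressions. Specifically, for $x \in Tr_1(r,\Q)$ we have $\log(x) = \sum_{k=1}^{r-1}(-1)^{k+1}(x-1)^k/k$, which is a genuine polynomial in $x$ (with rational coefficients) because $(x-1)^r = 0$. Similarly, $\exp(u) = \sum_{k=0}^{r-1} u^k/k!$ is a polynomial in $u$ for $u \in Tr_0(r,\Q)$. Once this is on the table, everything else is essentially bookkeeping.

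The next ingredient is the standard observation that if $a$ and $b$ commute in an associative algebra, then any polynomial in $a$ commutes with any polynomial in $b$, since $a^ib^j = b^ja^i$ for all $i,j$ and this extends linearly. Applying this with $a=x_1$, $b=x_2$ and the polynomials giving $\log$, I get the forward implication of the first statement: if $x_1x_2 = x_2x_1$ then $\log(x_1)$ and $\log(x_2)$ commute. The same principle applied to the polynomials defining $\exp$ handles the forward implication of the second statement: if $u_1u_2=u_2u_1$, then $\exp(u_1)$ and $\exp(u_2)$ commute.

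For the reverse directions I would invoke Remark \ref{logExpInv}. If $\log(x_1)$ and $\log(x_2)$ commute, then by the exp-polynomial argument $\exp(\log(x_1))$ and $\exp(\log(x_2))$ commute; but these are $x_1$ and $x_2$, respectively. Symmetrically, if $\exp(u_1)$ and $\exp(u_2)$ commute, then applying the $\log$-polynomial argument gives that $\log(\exp(u_1)) = u_1$ and $\log(\exp(u_2)) = u_2$ commute. Thus each pair of implications is proved by the same two-step trick (polynomial preservation of commutativity, then inverting via Remark \ref{logExpInv}).

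I don't anticipate any real obstacle: the only thing to be slightly careful about is confirming that the truncation of the $\log$ and $\exp$ series really is exact on the prescribed subsets (which follows from $u^r = 0$ for $u \in Tr_0(r,\Q)$ and from $(x-1)^r=0$ for $x \in Tr_1(r,\Q)$), so that the "polynomial in $x$" or "polynomial in $u$" language is literally correct rather than a formal identity of power series.
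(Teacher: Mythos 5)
Your proposal is correct and follows essentially the same route as the paper: the forward implications come from the fact that $\log$ and $\exp$ are (truncated, hence genuine) polynomial expressions, so commuting arguments yield commuting values, and the reverse implications follow by applying the inverse map and Remark \ref{logExpInv}. You merely spell out the ``polynomials in commuting elements commute'' step more explicitly than the paper does.
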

\begin{proof}
Let $x_1$ and $x_2$ be commuting matrices in $Tr_1(r, \Q)$.
Let $u_i = x_i-1$. Then $u_1$ and $u_2$ commute. Thus, from the definition of $\log$, 
we see that  $\log{x_1}$ and $\log{x_2}$ also commute. 
From this we also see that if $\exp(u_1)$ and $\exp(u_2)$ commute,
then by Remark \ref{logExpInv} so do $u_1 = \log(\exp(u_1))$ and $u_2 = \log(\exp(u_2))$.

Now let $u_1$ and $u_2$ be commuting matrices in $Tr_0(r, \Q)$. 
By the definition of  $\exp$, $\exp(u_1)$ and $\exp(u_2)$ also commute. 
From this we also see that if $\log{x_1}$ and $\log{x_2}$ commute,
then by Remark \ref{logExpInv} so do $x_1 = \exp(\log(x_1))$ and $x_2 = \exp(\log(x_2))$.
\end{proof}

In $Tr_0(r, \Q)$ we will denote by $(u,v)$ the Lie bracket $uv-vu$.
%{\bf used in algorithm description}
\begin{prop}
\label{easyDirection}
Let $H$ be a finitely generated torsion-free nilpotent group. 
If $H = H_1 \times H_2$, 
then $\overline{H} = \overline{H_1} \times \overline{H_2}$.
\end{prop}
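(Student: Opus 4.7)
The plan is to use the concrete matrix model already established: embed $H$ in $Tr_1(r,\Z)$ for some $r$, and realize the rational closure as $\exp(L)$ where $L\subseteq Tr_0(r,\Q)$ is the $\Q$-span of $\{\log(h):h\in H\}$. Set $L_i$ to be the corresponding $\Q$-span of $\{\log(h):h\in H_i\}$, so that $\overline{H_i}=\exp(L_i)$ sits inside $Tr_1(r,\Q)$ as the rational closure of $H_i$ by the proposition above.

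First I would show $L=L_1+L_2$ and that every element of $L_1$ commutes (as a matrix) with every element of $L_2$. The commutativity is immediate from Proposition \ref{logExpComm}: since $H_1$ and $H_2$ commute elementwise, so do $\log(H_1)$ and $\log(H_2)$, and the relation extends $\Q$-bilinearly to all of $L_1$ and $L_2$. For the spanning statement, every $h\in H$ factors as $h=h_1 h_2$ with $h_i\in H_i$ commuting, and for commuting unitriangular matrices one has $\log(h_1 h_2)=\log(h_1)+\log(h_2)$, so $\log(h)\in L_1+L_2$.

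Next I would verify that the sum is direct. If $u\in L_1\cap L_2$, then $\exp(u)$ lies in $\overline{H_1}\cap\overline{H_2}$. By the third defining property of the rational closure, there are positive integers $N_1,N_2$ with $\exp(u)^{N_i}\in H_i$; taking $N=N_1 N_2$, the element $\exp(u)^{N}=\exp(Nu)$ lies in $H_1\cap H_2=\{1\}$. Injectivity of $\log$ on $Tr_1(r,\Q)$ (Remark \ref{logExpInv}) then forces $Nu=0$, so $u=0$.

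With these two steps the conclusion is essentially immediate. For $u_1\in L_1$ and $u_2\in L_2$, commutativity of the matrices yields $\exp(u_1+u_2)=\exp(u_1)\exp(u_2)$, so $\overline{H}=\exp(L_1\oplus L_2)=\overline{H_1}\cdot\overline{H_2}$; Proposition \ref{logExpComm} gives that $\overline{H_1}$ and $\overline{H_2}$ commute elementwise, and $\overline{H_1}\cap\overline{H_2}=\{1\}$ by the same argument used for $L_1\cap L_2$. Hence $\overline{H}=\overline{H_1}\times\overline{H_2}$. I do not anticipate any serious obstacle; the only mildly delicate point is the triviality of the intersection, which reduces cleanly to the defining property that each element of $\overline{H_i}$ has a power in $H_i$.
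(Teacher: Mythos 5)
Your argument is correct and follows essentially the same route as the paper: embed $H$ in $Tr_1(r,\Z)$ and transfer the problem through $\log$ and $\exp$. The only organizational difference is that you establish the decomposition $L = L_1\oplus L_2$ at the Lie-algebra level and then exponentiate (using additivity of $\log$ on commuting elements, which follows from the Baker--Campbell--Hausdorff $\star$ operation quoted in the proof of Proposition \ref{logExpProps}), whereas the paper argues directly with group elements, proving commutativity via the identity $m_1m_2(\log k_1,\log k_2)=0$ and the inclusion $\overline{H}\subseteq\overline{H_1}\times\overline{H_2}$ via uniqueness of $m$-th roots in $Tr_1(r,\Q)$; both versions rest on the same machinery and are equally valid.
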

\begin{proof}
We may assume that $H$ is a subgroup of $Tr_1(r, \Z)$.
We first show that $\overline{H_1}$ and $\overline{H_2}$ commute.
Let $k_1 \in \overline{H_1}$ and $k_2 \in \overline{H_2}$. 
There exist positive integers $m_1$ and $m_2$ such that 
$k_1^{m_1} \in H_1$ and $k_2^{m_2} \in H_2$.
Therefore, by Remark \ref{logOfPower},
\begin{eqnarray*}
0 & = & (\log(k_1^{m_1}), \log(k_2^{m_2})) \\
& = & (m_1 \log(k_1), m_2 \log(k_2)) \\
& =&  m_1 m_2 (\log(k_1), \log(k_2)).
\end{eqnarray*}
Therefore $(\log(k_1), \log(k_2)) = 0$, 
and hence by Proposition \ref{logExpComm}, 
$k_1$ and $k_2$ also commute, as desired.

It is easy to see that $\overline{H_1} \cap \overline{H_2} = 1$.
If $h \in  \overline{H_1} \cap \overline{H_2} $, then there exist positive integers $m_1$ 
and $m_2$ such that $h^{m_1} \in H_1$ and $h^{m_2} \in H_2$.
Thus $h^{m_1 m_2} \in H_1 \cap H_2 = 1$.
Since $Tr_1(n, \Q)$ is torsion-free, $h = 1$.

Finally we show that $\overline{H} \subseteq \overline{H_1} \times \overline{H_2}$. 
Suppose that $h \in \overline{H}$.
Then there exists a positive integer $m$ and elements $h_1 \in H_1, h_2 \in H_2$
such that $h^m = h_1 h_2$. Let $r_1$ and $r_2$ be the $m$'th roots of $h_1$ and $h_2$ 
respectively. 
Since $\overline{H_1}$ and $\overline{H_2}$ commute, 
$$(r_1 r_2)^m = r_1^m r_2^m = h_1 h_2 = h^m.$$
Since roots are unique in $Tr_1(r, \Q)$,  $h = r_1 r_2$.
\end{proof}

The upper central series plays a special role in the relationship between
a finitely generated torsion-free group and its rational closure, as the following
well-known theorem asserts (see \cite{Kurosh:book2} p. 257 for a proof and 
a discussion of the history of this result).
\begin{thm}
\label{upperCentral}
Let $G$ be a finitely generated torsion-free nilpotent group. 
Let $\Gamma_i(G)$ be the $i$'th term in the upper central series of $G$.
Then $\Gamma_i(\overline{G}) = \overline{\Gamma_i(G)}$ 
and $\Gamma_i(G) = \Gamma_i(\overline{G}) \cap G$. 
\end{thm}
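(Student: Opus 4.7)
The plan is to induct on $i$ and prove both equalities simultaneously; the case $i=0$ is vacuous. So assume both statements hold at level $i$ and aim to deduce them at level $i+1$.

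First, the easy inclusion $\Gamma_{i+1}(\overline G)\cap G\subseteq\Gamma_{i+1}(G)$ is immediate from the inductive hypothesis: for $g$ in the intersection and any $x\in G$, the commutator $[g,x]$ lies in $\Gamma_i(\overline G)\cap G=\Gamma_i(G)$, so $g\in\Gamma_{i+1}(G)$. This in turn yields $\Gamma_{i+1}(\overline G)\subseteq\overline{\Gamma_{i+1}(G)}$: for $g\in\Gamma_{i+1}(\overline G)$, any positive power $g^m$ that lies in $G$ lies in $\Gamma_{i+1}(\overline G)\cap G=\Gamma_{i+1}(G)$, so $g\in\overline{\Gamma_{i+1}(G)}$.

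The substantive direction is the reverse inclusion $\overline{\Gamma_{i+1}(G)}\subseteq\Gamma_{i+1}(\overline G)$. Take $g\in\overline{\Gamma_{i+1}(G)}$ with $g^n\in\Gamma_{i+1}(G)$, and an arbitrary $h\in\overline G$ with $h^m\in G$. Then $[g^n,h^m]\in\Gamma_i(G)\subseteq\overline{\Gamma_i(G)}=\Gamma_i(\overline G)$, so in the quotient $Q=\overline G/\Gamma_i(\overline G)$ the images $\bar g^{\,n}$ and $\bar h^{\,m}$ commute. Because $\overline{\Gamma_i(G)}$ is the isolator of $\Gamma_i(G)$ in $\overline G$, it is isolated there, so $Q$ is torsion-free nilpotent. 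The goal is then to upgrade the commutativity of these powers to the commutativity of $\bar g$ and $\bar h$ themselves, which will give $[g,h]\in\Gamma_i(\overline G)$ and hence $g\in\Gamma_{i+1}(\overline G)$.

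The main obstacle is exactly this upgrade. I would use the standard fact that in a torsion-free nilpotent group commuting powers force commuting elements. In the matrix setting of this paper this is transparent from Proposition~\ref{logExpComm} together with Remark~\ref{logOfPower}: if $x^n$ and $y^m$ commute in $Tr_1(r,\mathbb{Q})$, then $n\log(x)$ and $m\log(y)$ commute as matrices, so $\log(x)$ and $\log(y)$ commute, so $x$ and $y$ commute. Since the principle is purely group-theoretic, it applies inside $Q$ as well. Combining the inclusions yields $\Gamma_{i+1}(\overline G)=\overline{\Gamma_{i+1}(G)}$, and the second equality $\Gamma_{i+1}(G)=\Gamma_{i+1}(\overline G)\cap G$ then follows by pairing the easy $\supseteq$ established at the start with the inclusion $\Gamma_{i+1}(G)\subseteq\overline{\Gamma_{i+1}(G)}=\Gamma_{i+1}(\overline G)$.
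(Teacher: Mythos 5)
Your proof is correct. Note, though, that the paper does not prove Theorem \ref{upperCentral} at all: it is stated as a well-known result with a citation to Kurosh (p.~257), so there is no in-paper argument to compare against. Your induction supplies a clean self-contained proof: the two easy inclusions ($\Gamma_{i+1}(\overline G)\cap G\subseteq\Gamma_{i+1}(G)$ and hence $\Gamma_{i+1}(\overline G)\subseteq\overline{\Gamma_{i+1}(G)}$) are handled correctly, and your observation that $\overline{\Gamma_i(G)}=\Gamma_i(\overline G)$ is isolated \emph{because it is an isolator} (so the quotient $Q$ is torsion-free without appealing to the separate classical theorem that upper central factors of torsion-free nilpotent groups are torsion-free) is a nice economy. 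The only point needing care is the ``upgrade'' step: the justification via $\log$ and Proposition \ref{logExpComm} literally applies only to subgroups of $Tr_1(r,\Q)$, and $Q$ is a quotient of $\overline G$, not obviously presented as such a matrix group. You should instead invoke (or prove) the purely group-theoretic fact that centralizers in a torsion-free nilpotent group are isolated --- equivalently, $[x^n,y^m]=1$ with $n,m\geq 1$ forces $[x,y]=1$ --- which goes by induction on the nilpotency class and is genuinely standard. You flag this correctly, so I regard it as a citable standard lemma rather than a gap; with that reference in place the argument is complete.
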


We will now describe the strong sense in which decompositions of rational nilpotent groups are unique. 
We begin with a classical result about the uniqueness of decompositions in Lie algebras.
Let $L$ be a Lie algebra, and let $(i,j)$ denote the Lie bracket of two elements $i$ and $j$ in $L$. 
Recall that a subspace $J$ of  $L$ is an {\em ideal} if 
for all $j \in J$ and all $l \in L$, $(j, l) \in J$ and $(l, j) \in J$,
and such an ideal is {\em indecomposable} if it cannot be written as the direct
sum of two nontrivial ideals. 
$L$ is Artinian (resp. Noetherian) it it satisfies the descending (resp. ascending) chain 
condition on ideals.

The following is proved in \cite{FisherGrayHydon}.
\begin{thm}
\label{uniqueLie}
Let $L$ be a Lie algebra that is both Artinian and Noetherian.
Suppose also that $L$ has two decompositions as a direct sum of nontrivial indecomposable ideals:
\begin{eqnarray*}
L & = & M_1 \oplus M_2 \oplus \cdots \oplus M_r \\
& = & N_1 \oplus N_2 \oplus \cdots \oplus N_s
\end{eqnarray*}
Let $\pi_i$ be the projection of $L$ onto $M_i$,
and let $\psi_i$ be the projection of $L$ onto $N_i$.
Then $r = s$ and the summands can be reordered such that the following hold
for all $1 \leq k \leq r$:
\begin{itemize}
\item $\pi_k(N_k) = M_k$ and the restriction of $\pi_k$ to $N_k$ is an isomorphism from $N_k$ to $M_k$
whose inverse is the restriction of $\psi_k$ to $M_k$;
\item  
\begin{eqnarray*}
L & = & M_1 \oplus M_2 \oplus \cdots \oplus
M_{k} \oplus N_{k+1} \oplus \cdots \oplus N_r.
\end{eqnarray*}
\end{itemize}
\end{thm}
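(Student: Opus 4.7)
My plan is to adapt the classical Remak--Krull--Schmidt argument to Lie algebras, proceeding in three stages: setup of orthogonal idempotents coming from the two decompositions, a Fitting-type dichotomy on indecomposable summands, and an exchange argument. For the setup, I observe that each projection $\pi_i \colon L \to L$ (onto $M_i$, zero on the other summands) is a Lie algebra endomorphism of $L$, not just $F$-linear, since in a direct sum of ideals $[M_i, M_{i'}] \subseteq M_i \cap M_{i'} = 0$ for $i \neq i'$, so $\pi_i$ commutes with the bracket. The same is true for each $\psi_j$. Thus $\{\pi_i\}$ and $\{\psi_j\}$ are two families of pairwise orthogonal idempotent Lie endomorphisms of $L$, each summing to the identity.

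The heart of the argument is a Fitting-type dichotomy: if $M$ is an indecomposable Lie algebra satisfying ACC and DCC on ideals and $\phi$ is a Lie endomorphism of $M$ whose image is an ideal of $M$, then $\phi$ is either nilpotent or an automorphism. This follows by the standard Fitting argument: the chains $\ker \phi \subseteq \ker \phi^2 \subseteq \cdots$ and $\im \phi \supseteq \im \phi^2 \supseteq \cdots$ stabilize, yielding $M = \im(\phi^N) \oplus \ker(\phi^N)$ as a direct sum of ideals, and indecomposability forces one summand to vanish.

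To apply the dichotomy, I fix $k$ and set $\phi_j := \pi_k \psi_j|_{M_k} \colon M_k \to M_k$. A direct computation, using that each $\psi_j$ is a Lie endomorphism together with $[M_k, M_i] = 0$ for $i \neq k$ and $[N_j, N_{j'}] = 0$ for $j \neq j'$, shows that $\phi_j$ is a Lie endomorphism of $M_k$, that $\im(\phi_j)$ is an ideal of $M_k$, and that $\sum_{j=1}^{s} \phi_j = \text{id}_{M_k}$. A trace computation (valid in characteristic zero, the setting of the paper's application) rules out all $\phi_j$ being nilpotent, since the trace of the identity is $\dim M_k \neq 0$ while nilpotent linear maps have trace zero. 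Hence by the dichotomy some $\phi_j$ is an automorphism of $M_k$; after reordering the $N_j$ so this index is $k$, the restrictions $\pi_k|_{N_k} \colon N_k \to M_k$ and $\psi_k|_{M_k} \colon M_k \to N_k$ are mutually inverse Lie algebra isomorphisms. The partial replacement $L = M_1 \oplus \cdots \oplus M_k \oplus N_{k+1} \oplus \cdots \oplus N_s$ then follows by the standard exchange (checking that the sum is direct and exhausts $L$), and iteration yields $r = s$ together with the full chain of mixed decompositions.

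The main obstacle is the Fitting dichotomy together with the nonnilpotence step. Lie algebra endomorphisms do not form a ring --- the sum of two Lie endomorphisms need not preserve brackets --- so the standard additive Krull--Schmidt machinery based on local endomorphism rings is not directly available. One must verify by hand that $\im(\phi_j)$ is an ideal of $M_k$ (exploiting the specific form of $\phi_j$ as a composition of two orthogonal projections from the two decompositions), and then rule out every $\phi_j$ being nilpotent by a trace argument or, more generally, by a length argument on the finite-length ideal structure of $M_k$.
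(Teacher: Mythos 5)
The paper does not actually prove Theorem \ref{uniqueLie}; it simply cites \cite{FisherGrayHydon}. So your proposal has to be judged against the standard Krull--Schmidt adaptation that such a reference carries out, and your outline is indeed that argument: orthogonal idempotent projections, a Fitting dichotomy on indecomposable summands, and an exchange step. For the setting in which the paper actually applies the theorem --- $L$ a finite-dimensional Lie algebra over $\Q$ --- your proof is essentially complete: the $\phi_j=\pi_k\psi_j|_{M_k}$ sum to the identity on $M_k$, Fitting gives nilpotent-or-automorphism, and the trace argument rules out all of them being nilpotent.

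Two points need attention, one cosmetic and one a genuine gap for the theorem as stated. First, your dichotomy is phrased for ``a Lie endomorphism whose image is an ideal,'' but Fitting requires $\im(\phi^n)$ to be an ideal for every $n$, which does not follow from $\im(\phi)$ being one. The property that actually propagates is \emph{normality}: $\phi([x,y])=[\phi(x),y]$ for all $x,y$. The projections are normal because $[M_i,M_{i'}]=0$ for $i\ne i'$; normality is preserved under sums, compositions and inverses; and a normal map and all of its powers automatically have ideal image and ideal kernel. This is the right hypothesis for the dichotomy, and it also streamlines the verification that $\im(\phi_j)$ is an ideal. Second, and more seriously, the theorem is stated for arbitrary Artinian and Noetherian Lie algebras; these need not be finite-dimensional (an infinite-dimensional simple Lie algebra has only two ideals) and the field may have positive characteristic, so the trace argument does not cover the stated generality. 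Your fallback ``length argument'' is not supplied, and it cannot be run inside your chosen class of maps: the standard induction writes $1-\phi_1=\phi_2+\cdots+\phi_s$, inverts this by a geometric series when $\phi_1$ is nilpotent, and multiplies through by the inverse --- but sums of Lie endomorphisms are not Lie endomorphisms, so the resulting maps leave your framework and Fitting can no longer be applied to them. Working with normal endomorphisms instead, these form a ring in which (by Fitting) every element is nilpotent or invertible and the nonunits form an ideal; the ring is local, so some $\phi_j$ is a unit. (A last small point: concluding that $\pi_k|_{N_k}$ and $\psi_k|_{M_k}$ are mutually inverse also needs the dichotomy applied symmetrically to $\psi_k\pi_k|_{N_k}$ to see that $\psi_k(M_k)$ is all of $N_k$.) With the normality substitution your argument goes through in full generality; as written it establishes the theorem only in the finite-dimensional characteristic-zero case, which, to be fair, is all the paper needs.
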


We will need a slight reformulation:
\begin{cor}
\label{uniqueLieCor}
Let $L$ be a Lie algebra satisfying the hypotheses of Theorem \ref{uniqueLie}.
Then for any $k$ such that $1 \leq k \leq r$, 
\begin{eqnarray*}
L & = & M_1 \oplus M_2 \oplus \cdots \oplus M_{k-1} \oplus N_{k} \oplus M_{k+1} \cdots \oplus M_r.
\end{eqnarray*}
\end{cor}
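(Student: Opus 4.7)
The plan is to exploit the isomorphism $\pi_k|_{N_k}\colon N_k \to M_k$ supplied by Theorem~\ref{uniqueLie} in order to ``swap'' $M_k$ for $N_k$ in the original $M$-decomposition of $L$ while leaving the remaining $M_i$'s untouched.  Setting $M_k' = M_1 \oplus \cdots \oplus M_{k-1} \oplus M_{k+1} \oplus \cdots \oplus M_r$, one has $M_k' = \ker \pi_k$, so the statement to prove reduces to the identity $L = M_k' \oplus N_k$ as a direct sum of ideals.

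First I would check that $N_k \cap M_k' = 0$: an element of this intersection lies in $N_k$ and is killed by $\pi_k$, hence must be $0$ by injectivity of $\pi_k|_{N_k}$.  Next, given any $y \in L$, the surjectivity of $\pi_k|_{N_k}$ onto $M_k$ yields some $n \in N_k$ with $\pi_k(n) = \pi_k(y)$, and then $y - n \in \ker \pi_k = M_k'$, so $y = n + (y-n) \in N_k + M_k'$.  Thus $L = N_k \oplus M_k'$ as vector spaces.

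Finally, $N_k$ is an ideal by hypothesis and $M_k'$ is an ideal as a sum of ideals; since any two ideals of a Lie algebra whose vector-space intersection is zero automatically satisfy $[N_k, M_k'] \subseteq N_k \cap M_k' = 0$, the decomposition $L = N_k \oplus M_k'$ is in fact a direct sum of ideals.  Expanding $M_k'$ into its $M_i$ summands then yields the stated decomposition.  There is no real obstacle here: once Theorem~\ref{uniqueLie} is available, the corollary is a short verification using only the projection isomorphism and the elementary observation that two ideals meeting trivially commute.
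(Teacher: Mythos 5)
Your proof is correct, and its core is the same as the paper's: exchange $M_k$ for $N_k$ via the isomorphism $\pi_k|_{N_k}\colon N_k\to M_k$ supplied by Theorem \ref{uniqueLie}, using $\ker\pi_k = M_1\oplus\cdots\oplus M_{k-1}\oplus M_{k+1}\oplus\cdots\oplus M_r$. For the spanning step the two arguments are essentially identical (your $n$ is the paper's $\psi_k(\pi_k(y))$; the paper just reduces first to $y=m\in M_k$). The one genuine difference is how directness is established. The paper appeals back to the exchange decompositions of Theorem \ref{uniqueLie} twice --- once as stated to see that $N_k$ is disjoint from and commutes with $M_1\oplus\cdots\oplus M_{k-1}$, and once with the roles of the $M_i$'s and $N_i$'s reversed to handle $M_{k+1}\oplus\cdots\oplus M_r$. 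You instead get $N_k\cap M_k'=0$ in one stroke from injectivity of $\pi_k|_{N_k}$, and then invoke the standard fact that two ideals with trivial intersection satisfy $[N_k,M_k']\subseteq N_k\cap M_k'=0$. Your route is more self-contained and arguably cleaner: it uses only the isomorphism statement of the theorem rather than its exchange property, and it avoids the slight awkwardness in the paper of deducing disjointness from the full sum out of disjointness from two separate pieces. Both are valid; the paper's version leans harder on the theorem, yours on elementary ideal arithmetic.
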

\begin{proof}
To see that
\begin{eqnarray*}
L & = & M_1 + M_2 + \cdots + M_{k-1} + N_{k} + M_{k+1} \cdots + M_r,
\end{eqnarray*}
we need to show that $M_k \subseteq M_1 + M_2 + \cdots + M_{k-1} + N_{k} + M_{k+1} \cdots + M_r$.
Let $m \in M_k$, and let $n_k = \psi_k(m) \in N_k$. Then $\pi_k(m - n_k) = 0$, so 
$m- n_k \in M_1 + \cdots + M_{k-1} + M_{k+1} + \cdots + M_r$.
Thus $m \in M_1 + M_2 + \cdots + M_{k-1} + N_{k} + M_{k+1} \cdots + M_r$.

It is clear from the statement of Theorem \ref{uniqueLie} that $N_k$ commutes with and is disjoint from $M_1 \oplus M_2 \oplus \cdots \oplus M_{k-1}$,
and by reversing the roles of the $M_i$'s and $N_i$'s in Theorem \ref{uniqueLie},
it is clear also that $N_k$ commutes with and is disjoint from $M_{k+1} \oplus  \cdots \oplus M_r$.
\end{proof}

The $\log$ and $\exp$ maps satisfy the following well-known properties. 
%{\bf used in the proposition that follows}
\begin{prop}
\label{logExpProps}
Let $H_1, H_2$ be subsets of $Tr_1(r, \Q)$ and let $M_1, M_2$ be subsets of $Tr_0(r, \Q)$
such that $M_i = \log(H_i)$.
Then 
\begin{enumerate}
\item $H_1$ and $H_2$ commute if and only if $M_1$ and $M_2$ commute;
\item $H_1 \cap H_2 = 1$  if and only if $M_1 \cap M_2 = 0$;
\item $H_i$ is a rational subgroup of $Tr_1(r, \Q)$ if and only if $M_i$ is a Lie subalgebra
of $Tr_0(r, \Q)$.
\end{enumerate}
\end{prop}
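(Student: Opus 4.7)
The plan is to handle the three parts largely independently. Parts (1) and (2) should follow quickly from results already established in the excerpt, while part (3) is the substantive one and will require the Baker--Campbell--Hausdorff formula, which in our setting truncates to a finite expression because every element of $Tr_0(r,\Q)$ is a nilpotent matrix of bounded degree.

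For part (1), I would simply observe that $H_1$ and $H_2$ commute means $x_1 x_2 = x_2 x_1$ for every $x_1 \in H_1$ and $x_2 \in H_2$, while $M_1$ and $M_2$ commute means $u_1 u_2 = u_2 u_1$ for every $u_1 \in M_1 = \log(H_1)$ and $u_2 \in M_2 = \log(H_2)$. Since $M_i$ is by definition the image of $H_i$ under $\log$, an elementwise application of Proposition \ref{logExpComm} gives the biconditional. For part (2), I would use Remark \ref{logExpInv} to note that $\log$ and $\exp$ are mutually inverse bijections between $Tr_1(r,\Q)$ and $Tr_0(r,\Q)$ sending $1$ to $0$ and $0$ to $1$. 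Hence they restrict to a bijection between $H_i$ and $M_i$ under which $H_1 \cap H_2$ corresponds to $M_1 \cap M_2$, and the trivial subgroup corresponds to the zero subspace.

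For part (3), the key tool is the Baker--Campbell--Hausdorff (BCH) formula. On the one hand, for $x,y \in Tr_1(r,\Q)$ one has
$$\log(xy) \;=\; \log(x) + \log(y) + \tfrac{1}{2}\bigl(\log x,\log y\bigr) + \cdots,$$
a universal Lie polynomial in $\log x$ and $\log y$ that terminates after finitely many terms since the arguments are nilpotent of degree at most $r$. Dually, there is an ``inverse BCH'' expressing $\exp(u+v)$ as a finite product of exponentials of iterated brackets of $u$ and $v$ with rational coefficients. Using the forward BCH, if $M_i$ is closed under addition, rational scaling, and Lie brackets, then $\log(xy) \in M_i$ whenever $\log x, \log y \in M_i$, and together with $x^{-1} = \exp(-\log x)$ and $x^{1/n} = \exp(\tfrac{1}{n}\log x)$ this shows $H_i = \exp(M_i)$ is a rational subgroup. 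Conversely, using the inverse BCH together with closure of $H_i$ under products and rational powers, I would recover $u + v$ and $(u,v)$ inside $M_i$, showing $M_i$ is a Lie subalgebra.

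The main obstacle will be packaging the BCH translation cleanly rather than anything genuinely hard: one must verify both that the Lie polynomial in $\log x, \log y$ lies in $M_i$ when $M_i$ is a subalgebra, and conversely that $u+v$ and the bracket $(u,v)$ can be obtained from group operations in $H_i$. For the latter, the bracket can be extracted from the group commutator via
$$(u,v) \;=\; \log\bigl([\exp u,\exp v]\bigr) + \text{higher-order bracket corrections},$$
and a finite induction on the nilpotency class, using the closure of $H_i$ under rational powers, inverts this relation to express $(u,v)$ and $u+v$ as logarithms of elements of $H_i$. Provided one is willing to invoke BCH as a standard fact about unipotent matrix groups, assembling these pieces is routine and the finiteness forced by nilpotency avoids any convergence issues.
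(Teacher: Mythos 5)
Your proposal is correct and follows essentially the same route as the paper: parts (1) and (2) by elementwise appeal to Proposition \ref{logExpComm} and the inverse-bijection property of $\log$ and $\exp$, and the ``subalgebra implies rational subgroup'' direction of part (3) via the truncated Baker--Campbell--Hausdorff formula together with $\exp(qu)=(\exp u)^q$. The only divergence is in the converse direction of (3), where the paper simply cites Segal's book while you sketch the standard inverse-BCH/induction-on-nilpotency-class argument; that sketch is the usual proof of the cited fact and is fine.
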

\begin{proof}
The first claim follows from Lemma \ref{logExpComm}.

The second claim follows easily from the fact that $\log$ and $\exp$ are inverse bijections. 
Suppose that $H_1 \cap H_2 = 1$, and let $m \in M_1 \cap M_2$. 
Then there exist $h_i \in H_i$ such that $m = \log(h_1) = \log(h_2)$.
Thus $\exp(m) = h_1 = h_2 = 1$ and hence $m = \log(1) = 0$.
Conversely suppose that $M_1 \cap M_2 = 0$ and let $h \in H_1 \cap H_2$.
Then $\log(h) \in M_1 \cap M_2 = 0$, so $\log(h) =0$.  Therefore $h = 1$. 

We now prove the third claim. 
If $H_1$ is a rational subgroup, then $M_1$ is a Lie subalgebra (see Theorem 2 on page 104 of \cite{Segal:book}). 
For the converse, suppose that $M_1$ is a Lie subalgebra.
There is an operator $\star$ on $Tr_0(r, \Q)$ defined using the Baker-Campbell-Hausdorff formula:
for $u,v \in Tr_0(r, \Q)$, $u \star v = u + v + l$, where $l$ is a certain $\Q$-linear combination of 
repeated Lie brackets of $u$ and $v$. This $\star$ operator satisfies 
$\exp({u \star v}) = \exp{u} \exp{v}$ for all $u,v \in Tr_0(r, \Q)$. 
(For a definition and properties of $\star$, see p. 102 in \cite{Segal:book}.)
Since $M_1$ is closed under $\star$, it follows that $H_1$ is closed under multiplication. 
Since $\exp(qu) = (\exp{u})^q$ for all $u \in Tr_0(r, \Q)$ and all $q \in \Q$, 
$H_1$ is closed under the taking of roots and inverses.
This establishes the third claim in our proposition. 
\end{proof}

We are now in a position to state our desired result concerning the uniqueness of decompositions
in the rational closure of a finitely generated torsion-free nilpotent group.

\begin{prop}
\label{rationalUnique}
Let $G$ be a finitely generated torsion-free nilpotent group. 
Suppose that we have two decompositions of $\overline{G}$ as the direct product of 
nontrivial rational subgroups which are themselves rationally indecomposable:
\begin{eqnarray*}
\overline{G} & = & R_1 \times R_2 \times \cdots \times R_m \\
& = & K_1 \times K_2 \times \cdots \times K_n.
\end{eqnarray*}
Let $\alpha_i$ be the projection of $\overline{G}$ onto $R_i$,
and let $\beta_i$ be the projection of $\overline{G}$ onto $K_i$.
Then $m = n$.  Furthermore, there is a way to reorder the factors such that
the following three properties hold for all $1 \leq i \leq m$:
\begin{enumerate}
\item $\alpha_i(K_i) = R_i$, and the restriction of $\alpha_i$ to $K_i$ is an isomorphism from 
$K_i$ to $R_i$ whose inverse is the restriction of $\beta_i$ to $R_i$, and
\item 
\begin{displaymath}
\overline{G}  = R_1 \times R_{2}  \times \cdots \times R_{i-1} \times K_i \times R_{i+1} \times \cdots \times R_m.
\end{displaymath}
\end{enumerate}
\end{prop}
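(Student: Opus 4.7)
The plan is to pass through the logarithm map, apply the Lie algebra uniqueness result (Theorem~\ref{uniqueLie} and Corollary~\ref{uniqueLieCor}), and translate back via the exponential. Concretely, I would first embed $G$, and hence $\overline{G}$, as a subgroup of $Tr_1(r,\Q)$, and let $L = \log(\overline{G}) \subseteq Tr_0(r,\Q)$. Then $L$ is a finite-dimensional $\Q$-Lie subalgebra, and so it is automatically both Artinian and Noetherian, putting us in position to apply Theorem~\ref{uniqueLie}.

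Next I would convert each given decomposition of $\overline{G}$ into an ideal decomposition of $L$. Setting $M_i = \log(R_i)$ and $N_j = \log(K_j)$, Proposition~\ref{logExpProps} shows each $M_i$ and $N_j$ is a Lie subalgebra, that the $M_i$'s pairwise commute and pairwise intersect trivially (and likewise the $N_j$'s), and pairwise commuting subalgebras that span $L$ must form an internal direct sum (using that for commuting elements $\log(xy) = \log(x) + \log(y)$ from the Baker--Campbell--Hausdorff identity). Since each $M_i$ commutes with every $M_j$ for $j \neq i$ and is closed under brackets with itself, each $M_i$ is in fact an ideal of $L$, and dually for the $N_j$. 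Rational indecomposability of each $R_i$ corresponds, again via Proposition~\ref{logExpProps}, to indecomposability of $M_i$ as an ideal: any ideal splitting of $M_i$ would exponentiate to a nontrivial rational direct product decomposition of $R_i$.

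I would then invoke Theorem~\ref{uniqueLie} together with Corollary~\ref{uniqueLieCor} to obtain $m=n$, a reordering, and the required intertwining of the Lie algebra projections $\pi_i$ and $\psi_i$, as well as the substitution identity replacing $M_i$ with $N_i$. To transfer this back to the group side, observe that the group projections $\alpha_i : \overline{G} \to R_i$ and $\beta_i : \overline{G} \to K_i$ correspond, under the bijection $\exp\colon L \to \overline{G}$, to the Lie algebra projections $\pi_i$ and $\psi_i$: for $h \in \overline{G}$, writing $\log(h) = \sum m_j$ with $m_j \in M_j$ and noting that the $m_j$ pairwise commute, we get $h = \prod \exp(m_j)$, so $\alpha_i(h) = \exp(\pi_i(\log(h)))$, and similarly for $\beta_i$. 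Applying $\exp$ to the conclusions of Theorem~\ref{uniqueLie} and Corollary~\ref{uniqueLieCor} and using Proposition~\ref{logExpProps} to recognize the resulting sets as rational subgroups, commuting, intersecting trivially, and spanning, yields exactly conclusions (1) and (2) of the proposition.

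The only step requiring genuine care, rather than routine translation, is the verification that a rational direct product decomposition of $\overline{G}$ really does produce an internal \emph{ideal} direct sum of $L$ (not merely a subalgebra direct sum) and, conversely, that an ideal splitting of some $M_i$ exponentiates to a rational direct product splitting of $R_i$. Both reduce, via Proposition~\ref{logExpProps} and BCH, to the fact that for a pair of commuting unitriangular matrices multiplication on the group side matches addition on the Lie algebra side; once that is in hand, everything else is a mechanical transcription of the Lie algebra statements through $\log$ and $\exp$.
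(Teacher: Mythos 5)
Your proposal is correct and follows essentially the same route as the paper's own proof: pass to $L=\log(\overline{G})$, convert each factorization into an ideal direct sum via Proposition~\ref{logExpProps}, apply Theorem~\ref{uniqueLie} and Corollary~\ref{uniqueLieCor}, and pull the conclusions back through $\exp$, identifying $\alpha_i$ with $\exp\circ\pi_i\circ\log$. Your extra care about why the $M_i$ are genuinely ideals and why indecomposability transfers is a welcome refinement of details the paper leaves implicit, but it is not a different argument.
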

\begin{proof}
Let $L = \log(\overline{G})$.
Since $L$ is a Lie subalgebra of the finite dimensional Lie algebra $Tr_0(r, \Q)$, 
it is itself finite dimensional,  and hence it is both Artinian and Noetherian. 
Let $M_i = \log(R_i)$ and let $N_i = \log(K_i)$.
By Proposition \ref{logExpProps}, we have 
\begin{eqnarray*}
L & = & M_1 \oplus M_2 \oplus \cdots \oplus M_r \\
& = & N_1 \oplus N_2 \oplus \cdots \oplus N_s,
\end{eqnarray*}
and the conclusions of Theorem \ref{uniqueLie} and its corollary hold.
Applying the $\exp$ map to 
\begin{eqnarray*}
L & = & M_1 \oplus M_2 \oplus \cdots \oplus M_{i-1} \oplus N_{i} \oplus M_{i+1} \cdots \oplus M_m,
\end{eqnarray*}
we get 
\begin{displaymath}
\overline{G}  = R_1 \times R_{2}  \times \cdots \times R_{i-1} \times K_i \times R_{i+1} \times \cdots \times R_m.
\end{displaymath}

Since $\alpha_i$ and $\beta_i$ are projections, they are clearly group homomorphisms. 
It is easy to see that $\alpha_i = \exp \circ \pi_i \circ \log$ and 
that $\beta_i = \exp \circ \psi_i \circ \log$:
let $h \in \overline{G}$, and let $r_i \in R_i$ such that $h = r_1 r_2 \cdots r_m$;
since the $r_i$'s commute,  
\begin{eqnarray*}
\exp \circ \pi_i \circ \log (h) & = & \exp \circ \pi_i (\log(r_1) + \cdots + \log(r_m)) \\
& = & \exp(\log(r_i)) = r_i = \alpha_i(h).
\end{eqnarray*}
It now follows from Theorem \ref{uniqueLie} 
that the suitable restrictions of $\alpha_i$ and $\beta_i$ are inverse bijections as desired. 
\end{proof}

An automorphism $\theta$ of a group $G$ is called {\em normal} if for all $x, y \in G$, $\theta(x^y) = (\theta(x))^y$. 
\begin{cor}
Let $G$ be a group satisfying the hypotheses of Proposition \ref{rationalUnique}.
If $\Theta_i$ is given by
\begin{displaymath}
\Theta_i((r_1, r_2, \ldots, r_m)) = (r_1, r_2, \ldots, r_{i-1}, \beta_i(r_i), r_{i+1}, \ldots, r_m)),
\end{displaymath}
then $\Theta_i$ is a normal automorphism of $\overline{G}$.
\end{cor}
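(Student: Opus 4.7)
The plan is to handle two things separately: first, that $\Theta_i$ is a well-defined automorphism of $\overline{G}$; second, that $\Theta_i(x^y) = (\Theta_i(x))^y$ for all $x, y \in \overline{G}$. For the automorphism assertion, I would realize $\Theta_i$ as the composition of the canonical identification $\overline{G} = R_1 \times \cdots \times R_m$ with the product map that is the identity on every factor $R_j$ with $j \neq i$ and equals $\beta_i|_{R_i}$ on the $i$-th factor, followed by the canonical identification $R_1 \times \cdots \times R_{i-1} \times K_i \times R_{i+1} \times \cdots \times R_m = \overline{G}$. Part~(1) of Proposition~\ref{rationalUnique} says that $\beta_i|_{R_i}$ is a group isomorphism $R_i \to K_i$, so the middle map is an isomorphism of direct products; part~(2) says that its codomain really is $\overline{G}$. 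Hence $\Theta_i \in \operatorname{Aut}(\overline{G})$.

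For normality, I would write $x = r_1 \cdots r_m$ and $y = s_1 \cdots s_m$ with $r_j, s_j \in R_j$. Pairwise commutativity of the $R_j$ gives $x^y = r_1^{s_1} \cdots r_m^{s_m}$ with each $r_j^{s_j} \in R_j$, so
\[
\Theta_i(x^y) = r_1^{s_1} \cdots r_{i-1}^{s_{i-1}} \beta_i(r_i^{s_i}) r_{i+1}^{s_{i+1}} \cdots r_m^{s_m}.
\]
The second decomposition from part~(2) of Proposition~\ref{rationalUnique} has $K_i$ commuting with every $R_j$ for $j \neq i$, so the same computation carried out in that decomposition yields
\[
\Theta_i(x)^y = r_1^{s_1} \cdots r_{i-1}^{s_{i-1}} \beta_i(r_i)^{s_i} r_{i+1}^{s_{i+1}} \cdots r_m^{s_m}.
\]
Thus normality reduces to the single identity $\beta_i(r_i^{s_i}) = \beta_i(r_i)^{s_i}$.

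Since $\beta_i$ is a homomorphism, $\beta_i(r_i^{s_i}) = \beta_i(r_i)^{\beta_i(s_i)}$, so it suffices to check that $s_i \beta_i(s_i)^{-1}$ centralizes $\beta_i(r_i)$. Decomposing $s_i$ in $\overline{G} = K_1 \times \cdots \times K_m$, its $K_i$-component is $\beta_i(s_i)$ by definition, so $s_i \beta_i(s_i)^{-1}$ lies in the internal product of the $K_j$ with $j \neq i$, which commutes with $K_i \ni \beta_i(r_i)$. The main obstacle to watch out for is juggling the two direct-product decompositions $R_1 \times \cdots \times R_m$ and $K_1 \times \cdots \times K_m$ of $\overline{G}$ simultaneously and keeping track of which pairs of factors commute in each one.
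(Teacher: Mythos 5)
Your proof is correct and follows essentially the same route as the paper: the automorphism claim is read off from Proposition \ref{rationalUnique}, and normality is reduced to the single identity $\beta_i(r^s)=\beta_i(r)^{s}$ for $r,s\in R_i$, which is verified via $\beta_i(r)^{s}=\beta_i(r)^{\beta_i(s)}=\beta_i(r^{s})$. You merely supply the details the paper labels ``easy to see,'' in particular that $s\,\beta_i(s)^{-1}$ lies in $\prod_{j\neq i}K_j$ and hence centralizes $K_i$.
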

\begin{proof}
The fact that $\Theta_i$ is an automorphism follows immediately from Proposition \ref{rationalUnique}.
To show that $\Theta_i$ is normal, it suffices to show that for all $r, s \in R_i$, $\beta_i(r^s) = (\beta_i(r))^s$, but
it is easy to see that $(\beta_i(r))^s = (\beta_i(r))^{\beta_i(s)} = \beta_i(r^s)$.
\end{proof}

We are interested in normal automorphisms because they fix centralizers:
\begin{rem} If $\theta$ is a normal automorphism of group $G$, and if $C_G(h)$ is the centralizer of $h$ in $G$, 
then $C_G(h) = \theta(C_G(h))$.
\end{rem}
\begin{proof}
Let $x \in C_G(h)$. Let $y = \theta^{-1}(x)$. Then 
$$\theta(h^y) =\theta(h)^{\theta(y)}=  \theta(h)^x = \theta(h^x) = \theta(h).$$
Therefore, $h^y = h$ and $y \in C_G(h)$. Thus $C_G(h) \subseteq \theta(C_G(h))$.
We obtain the opposite inclusion by considering the inverse of $\theta$. 
\end{proof}

Finally we will need to use the fact that there exist algorithms to determine whether 
$\overline{G}$ is rationally decomposable, and, if so, to compute a decomposition
(see Section 1.15 of \cite{deGraaf:book}, for example).
\begin{prop} 
Let $G$ be a finitely generated torsion-free nilpotent group.
There exists an algorithm 
to compute finite sets 
$A_1, A_2, \ldots, A_m$ 
of elements of $\overline{G}$
such that if $S_i$ is the smallest rational subgroup of $\overline{G}$ containing
$A_i$,
then 
\begin{displaymath}
\overline{G} = S_1 \times S_2 \times \cdots \times S_m.
\end{displaymath}
\end{prop}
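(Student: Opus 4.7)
The strategy is to transport the problem to a linear-algebra question inside the finite-dimensional Lie algebra $L = \log(\overline{G}) \subseteq Tr_0(r, \Q)$, decompose $L$ there, and then push the decomposition back to $\overline{G}$ via $\exp$. By Proposition \ref{logExpProps} the $\log/\exp$ correspondence sends rational subgroups of $\overline{G}$ bijectively to Lie subalgebras of $L$, commuting pairs to commuting pairs, and trivial intersections to zero intersections; so any decomposition of $L$ as a direct sum of pairwise-commuting ideals with pairwise zero intersections will immediately yield a direct product decomposition of $\overline{G}$.

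To begin, I would compute an embedding of $G$ into $Tr_1(r, \Z)$ for some explicit $r$ using the standard Mal'cev embedding (see Chapter 5 of \cite{Segal:book}) and then compute the matrices $\log(g_1), \ldots, \log(g_k) \in Tr_0(r, \Q)$ from a generating set $g_1, \ldots, g_k$ of $G$ by evaluating the truncated log series. To produce a $\Q$-basis of $L$, exploit the Baker--Campbell--Hausdorff identity $\log(\exp(u)\exp(v)) = u \star v$: since $u \star v$ is a Lie polynomial in $u$ and $v$, every $\log(g)$ with $g \in G$ lies in the Lie subalgebra generated by $\log(g_1), \ldots, \log(g_k)$, and conversely that Lie subalgebra is contained in $L$ because $L$ is itself a Lie subalgebra by Proposition \ref{logExpProps}(3); hence the two coincide. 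A basis of $L$ is then obtained by iteratively augmenting $\{\log(g_i)\}$ with brackets $(u,v)$ and pruning linear dependencies, a process that terminates because $Tr_0(r, \Q)$ is finite-dimensional.

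With a basis of $L$ in hand I would invoke the algorithm of Section 1.15 of \cite{deGraaf:book} to compute $\Q$-bases $B_1, \ldots, B_m$ of ideals $L_1, \ldots, L_m$ of $L$ satisfying $L = L_1 \oplus \cdots \oplus L_m$. Setting $A_i = \{\exp(u) : u \in B_i\} \subseteq \overline{G}$, the smallest rational subgroup $S_i$ of $\overline{G}$ containing $A_i$ corresponds under $\log$, by Proposition \ref{logExpProps}(3), to the smallest Lie subalgebra of $L$ containing $B_i$; since $L_i$ is already a Lie subalgebra spanned by $B_i$, that smallest subalgebra is $L_i$ itself, so $S_i = \exp(L_i)$. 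The $L_i$ are ideals meeting pairwise in $0$, so $(L_i, L_j) = 0$ and $L_i \cap L_j = 0$ for $i \neq j$; parts (1) and (2) of Proposition \ref{logExpProps} then give that the $S_i$ commute pairwise and intersect trivially. Combined with $\overline{G} = \exp(L) = S_1 S_2 \cdots S_m$, this yields the desired decomposition. The real work is concentrated in the Lie algebra decomposition step cited from \cite{deGraaf:book}; the remaining ingredients amount to routine linear algebra over $\Q$ together with standard manipulations of the truncated $\log$ and $\exp$ series.
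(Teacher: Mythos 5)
Your argument is correct and is exactly the route the paper intends: the proposition is stated in the paper without proof, with only a citation to Section 1.15 of \cite{deGraaf:book}, and your write-up supplies precisely the missing details --- computing $L=\log(\overline{G})$ as the Lie algebra generated by the $\log(g_i)$ via Baker--Campbell--Hausdorff, decomposing $L$ into indecomposable ideals with de Graaf's algorithm, and transferring the decomposition back through Proposition \ref{logExpProps}. The only point worth tightening is that a direct product requires $S_i\cap\prod_{j\neq i}S_j=1$ rather than merely pairwise trivial intersections, but this follows immediately from $L_i\cap\sum_{j\neq i}L_j=0$ by the same $\log/\exp$ correspondence you already invoke.
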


\section{Abelian direct factors}
\label{abelianSection}

In this section we describe an algorithm for deciding whether a given 
finitely generated torsion-free nilpotent group has a nontrivial abelian direct factor.
In \cite{Baumslag:nilpDecomp}, Baumslag proves that factorizations of finitely generated
torsion-free nilpotent groups are not unique. In Section \ref{examples}
we will illustrate how the algorithms of this section provide
an easy proof of this fact. 

Let $G$ be a finitely generated torsion-free nilpotent group.  Suppose that $G$ has
an abelian factor.  Then $G \isom H\times \Z = H \times \<c\mid \ \>$.  Computing 
abelianizations (factor derived groups) 
we have $G/[G,G] \isom H/[H,H] \times  \<c\mid \ \>$ so that $c$
is a {\em primitive element} in  $G/[G,G]$.  Here $G/[G,G]$ can have torsion,
so by primitive element we mean that its image is part of a basis modulo the torsion subgroup.
Note that $c\in Z(G)$.  Here is a test for the presence of an abelian direct factor.

\begin{lem}  
\label{primitiveElement}
Let $G$ be a finitely generated torsion-free nilpotent group. 
Then $G$ has a nontrivial abelian direct factor if and only if  the image of $Z(G)$
in the factor derived group $G/[G,G]$ contains a primitive element.
\end{lem}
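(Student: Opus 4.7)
The plan is to treat the two directions separately, with the forward implication a routine computation and the reverse implication built around constructing a projection $G \twoheadrightarrow \Z$ that splits off a copy of $\Z$ centrally.

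For the forward direction, suppose $G \cong H \times \langle c \rangle$ with $c$ of infinite order. Then $c$ lies in $Z(G)$ by construction, and abelianizing gives $G/[G,G] \cong H/[H,H] \times \langle c \rangle$. Since $H/[H,H]$ is a finitely generated abelian group, its torsion subgroup is finite, and modulo torsion the image of $c$ becomes a direct summand of rank one; this is precisely the assertion that $c$ is primitive modulo torsion, so the image of $c \in Z(G)$ in $G/[G,G]$ is a primitive element as required.

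For the reverse direction, assume there exists $c \in Z(G)$ whose image $\bar c$ in the finitely generated abelian group $A := G/[G,G]$ is primitive modulo the torsion subgroup $T$ of $A$. The key step is to produce a homomorphism $\psi \colon G \to \Z$ with $\psi(c) = 1$. Since $\bar c$ maps to a basis element of the free abelian group $A/T$, there is a homomorphism $A/T \to \Z$ sending its image to $1$; pulling back through the maps $G \twoheadrightarrow A \twoheadrightarrow A/T$ yields the desired $\psi$. In particular $c$ has infinite order.

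Now set $H = \ker\psi$, a normal subgroup of $G$. I would then verify the four defining conditions for a direct product decomposition $G = H \times \langle c \rangle$: centrality of $c$ makes $\langle c \rangle$ normal and shows the two factors commute; for any $g \in G$, writing $n = \psi(g)$ gives $g c^{-n} \in H$, hence $G = H \langle c \rangle$; and $H \cap \langle c \rangle = 1$ follows because $\psi(c^n) = n$. Since $\langle c \rangle \cong \Z$, this produces a nontrivial abelian direct factor, completing the proof.

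The main obstacle, such as it is, is the correct handling of torsion in $G/[G,G]$: one must be careful that a primitive element (in the sense defined in the paragraph preceding the lemma) does give rise to an actual projection to $\Z$, rather than merely to a finite cyclic quotient. Once that is set up, the remainder is a direct verification using only the centrality of $c$ and the defining property of $\psi$.
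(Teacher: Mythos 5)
Your proof is correct and follows essentially the same route as the paper: the necessity is the same abelianization computation, and for sufficiency the paper likewise builds a retraction $\theta\colon G \twoheadrightarrow \langle c\rangle$ from the primitivity of $\bar c$ and uses centrality of $c$ to get $G = \ker\theta \times \langle c\rangle$. Your write-up merely spells out the construction of the retraction via $A/T \to \Z$ in more detail than the paper does.
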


\proof  We know from above that the condition is necessary.  For sufficiency, suppose we
have an element $c\in Z(G)$ which is primitive.  Then there is a retraction
$\theta: G \onto \<c\mid \ \>$.  Since  $c\in Z(G)$, this  gives a direct product decomposition
$G = \ker \theta \times   \<c\mid \ \>$. \qed

\begin{thm}\label{thm:a}  There is an algorithm to determine of an arbitrary finite presentation of 
a torsion-free nilpotent group $G$ whether or not $G$ has an abelian direct factor.
If so, the algorithm expresses $G$ as $G \isom G_1 \times \Z^r$ where $G_1$
has no nontrivial abelian direct factor.
\end{thm}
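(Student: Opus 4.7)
The plan is to turn Lemma \ref{primitiveElement} into an effective procedure and then iterate. Given a finite presentation of $G$, one first passes to a consistent polycyclic (Mal'cev) presentation using the standard nilpotent quotient algorithm; from such a presentation one can effectively compute generating sets for subgroups, centralizers, abelianizations, and kernels of homomorphisms (see the algorithmic references in the introduction). In particular, I would compute a generating set for the center $Z(G)$, a presentation of the abelianization $G/[G,G]$, and explicit words for the images in $G/[G,G]$ of the generators of $Z(G)$.

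The next step is to decide whether the image of $Z(G)$ in $G/[G,G]$ contains a primitive element in the sense of Lemma \ref{primitiveElement}. Computing the torsion subgroup $T$ of $G/[G,G]$ reduces this to a lattice problem over $\Z$: given a finitely generated subgroup $A\subseteq \Z^n$ presented by a matrix of generators, decide whether $A$ contains an element that is primitive in $\Z^n$. This is handled by Smith normal form: $A$ contains a primitive vector of $\Z^n$ if and only if the smallest elementary divisor of the inclusion matrix is $1$, and in that case the unimodular transformations produced by the Smith algorithm yield an explicit primitive $\bar{c} \in A$. If no such element exists, Lemma \ref{primitiveElement} tells us $G$ has no nontrivial abelian direct factor and the algorithm halts.

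Otherwise, lift $\bar{c}$ to $c\in Z(G)$. The primitive vector gives a homomorphism $\pi\colon\Z^n \to \Z$ with $\pi(\bar{c})=1$; composing with the natural maps $G \to G/[G,G] \to (G/[G,G])/T = \Z^n$ produces $\varphi\colon G\to\Z$ with $\varphi(c)=1$, and hence a retraction $\theta\colon G \onto \<c\>$ defined by $\theta(g) = c^{\varphi(g)}$. Because $c$ is central, $G = \ker\theta \times \<c\>$, exactly as in the proof of Lemma \ref{primitiveElement}. Using the polycyclic machinery one computes a finite presentation of $\ker\theta$ as the preimage of $0$ under an explicit homomorphism to $\Z$, and then the algorithm recurses on $\ker\theta$. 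Since the Hirsch length strictly drops at each step, the recursion terminates and yields $G \isom G_1 \times \Z^r$ with $G_1$ having no nontrivial abelian direct factor. The main obstacle is procedural rather than conceptual: the mathematical content is already carried by Lemma \ref{primitiveElement} and by the observation that primitivity of a lattice element is detectable by Smith normal form. What one must verify is that each subsidiary step (center, abelianization, torsion subgroup, kernel of a map to $\Z$, presentation of such a kernel in a form suitable for the next recursive call) is indeed effective for finitely generated nilpotent groups, which is standard.
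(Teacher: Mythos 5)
Your proposal is correct and follows essentially the same route as the paper: reduce to Lemma \ref{primitiveElement}, detect a primitive central element in the torsion-free abelianization via Smith normal form, split off $\langle c\rangle$, and iterate. You are in fact somewhat more explicit than the paper about the recursion and its termination (Hirsch length dropping), which the paper leaves implicit.
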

\proof Let $W$ be the free abelian group $G/T$, where $T$ is the pullback in $G$ of the 
torsion subgroup of $G/[G,G]$, and let $n$ be the rank of $W$.
Let $V$ be the subgroup of $W$ given by $V = Z(G) [G,G] / [G,G]$,
and let $k$ be the rank of $V$. 
We can compute a basis for $W$ and a set of generators for $V$.
We can use a Smith normal form calculation to determine if $V$ contains a primitive element 
of $W$ as follows. Let $M$ be the $n \times k$ matrix whose $j$'th column is the $j$'th 
generator for $V$, expressed in terms of our basis for $W$.
Compute $P \in \mbox{Gl}_n(\Z)$ and $Q \in \mbox{Gl}_k(\Z)$ such that $PMQ = S$,
where $S$ is in Smith normal form. 
Then $V$ contains a primitive element of $W$ if and only if $S_{1,1} = 1$, 
in which case the first column of $P^{-1}$ is a primitive element of $W$ 
which is also an element of $V$. 
\qed

\section{Nonabelian direct factors}
\label{nonabelianSection}

In the previous section we described an algorithm for deciding if $G$ has a nontrivial abelian direct factor;
in this section we will develop an algorithm for deciding if $G$ has a decomposition as the the direct product 
of indecomposable nonabelian factors. In order to do so, we prove some structural theorems about the relationship between
the decompositions of a finitely generated torsion-free nilpotent group and the decompositions of 
its rational closure.

We begin with two definitions which will help simplify the exposition that follows. 
\begin{defn}
We say that decomposition $\overline{G} = R_1 \times R_2$  {\em matches} decomposition $G = G_1 \times G_2$   if 
$\overline{G_i} Z(\overline{G}) = R_i Z(\overline{G})$. 
\end{defn}

\begin{defn}
We say that decomposition $\overline{G} = R_1 \times R_2$  {\em gives rise to a decomposition of $G/Z(G)$}
if when $X_i = R_i Z(\overline{G}) \cap G$,
then $G = X_1 X_2$, $[X_1, X_2] =1$ and $X_1 \cap X_2 = Z(G)$. 
\end{defn}

We begin with some technical results.
In Proposition \ref{rationalUnique} we saw that decompositions of $\overline{G}$ 
are unique up to isomorphism. We will need a result which is slightly stronger in a way. 
\begin{prop}
\label{first}
Let $G$ be a finitely generated torsion-free nilpotent group. 
Suppose that we have two decompositions of $\overline{G}$ as the direct product of 
nontrivial rational subgroups which are themselves rationally indecomposable:
\begin{eqnarray*}
\overline{G} & = & R_1 \times R_2 \times \cdots \times R_m \\
& = & K_1 \times K_2 \times \cdots \times K_m
\end{eqnarray*}
and that the $K_i$'s have been permuted so that the conclusions 
of Proposition \ref{rationalUnique} hold. 
Then for all $i$,
$K_iZ(\overline{G}) = R_iZ(\overline{G})$.
\end{prop}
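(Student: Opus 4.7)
The plan is to pass to the Lie algebra picture via $\log$ and translate the statement into an equality of subspaces modulo the center. Set $L = \log(\overline{G})$, $M_i = \log(R_i)$, and $N_i = \log(K_i)$, as in the proof of Proposition \ref{rationalUnique}, so that $L = M_1 \oplus \cdots \oplus M_m = N_1 \oplus \cdots \oplus N_m$ as direct sums of ideals. By Proposition \ref{logExpComm}, $\log$ restricts to a bijection between $Z(\overline{G})$ and the Lie-algebra center $Z(L)$. Since $\exp$ turns sums of pairwise commuting elements into products of commuting group elements, the desired conclusion $K_i Z(\overline{G}) = R_i Z(\overline{G})$ is equivalent to the subspace identity $N_i + Z(L) = M_i + Z(L)$.

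The crucial input is Corollary \ref{uniqueLieCor}: for each $i$, the mixed decomposition
\[
L = M_1 \oplus \cdots \oplus M_{i-1} \oplus N_i \oplus M_{i+1} \oplus \cdots \oplus M_m
\]
is again a direct sum of ideals, which forces $[N_i, M_s] = 0$ for every $s \neq i$, as $N_i$ and $M_s$ are then distinct ideal summands.

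With that in hand, I would take an arbitrary $n \in N_i$ and write $n = m_1 + \cdots + m_m$ with $m_k \in M_k$ according to the first decomposition. For any $s \neq i$ and any $x \in M_s$, the commutativity just noted gives $0 = [n,x] = \sum_k [m_k, x] = [m_s, x]$, using $[M_k, M_s] = 0$ for $k \neq s$. Hence $m_s$ centralizes $M_s$, and since it already commutes with every $M_k$ for $k \neq s$, we conclude $m_s \in Z(L)$ for each $s \neq i$. Therefore $n - m_i = \sum_{s \neq i} m_s \in Z(L)$, which proves $N_i \subseteq M_i + Z(L)$. The reverse inclusion $M_i \subseteq N_i + Z(L)$ follows by swapping the roles of the two decompositions, since the situation is symmetric in $\{M_k\}$ and $\{N_k\}$. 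Applying $\exp$ then upgrades the equality $N_i + Z(L) = M_i + Z(L)$ to the required equality $K_i Z(\overline{G}) = R_i Z(\overline{G})$.

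I do not foresee a serious obstacle beyond the translation between group and Lie algebra. The content of the argument is the observation that Corollary \ref{uniqueLieCor} (rather than merely Proposition \ref{rationalUnique} itself) is the exact tool needed: it is what forces the components of $n \in N_i$ lying outside $M_i$ to be central, after which the conclusion follows from a one-line bracket computation.
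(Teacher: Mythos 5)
Your proof is correct, but it takes a genuinely different route from the paper's. The paper stays at the group level after Proposition \ref{rationalUnique}: it invokes the normal automorphism $\Theta_i$ (which carries $R_i$ to $K_i$ while fixing the complementary factor $R$), uses the fact that normal automorphisms preserve centralizers, and then intersects the identities $R_1 \times C_R(h) = K_1 \times C_R(h)$ over all $h \in R$ to land in $R_1 \times Z(R) \leq R_1 Z(\overline{G})$. You instead descend to $L = \log(\overline{G})$ and exploit the mixed decomposition $L = M_1 \oplus \cdots \oplus N_i \oplus \cdots \oplus M_m$ of Corollary \ref{uniqueLieCor} (equivalently, conclusion (2) of Proposition \ref{rationalUnique}) directly: writing $n \in N_i$ as $\sum_k m_k$ and bracketing against $M_s$ forces each off-diagonal component $m_s$ ($s \neq i$) into $Z(L)$, since distinct ideal summands commute. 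This is a clean, elementary exchange argument that bypasses the normal-automorphism machinery and the centralizer remark entirely; what it costs is the (routine but necessary) translation $\log(K_i Z(\overline{G})) = N_i + Z(L)$ via Baker--Campbell--Hausdorff, which you correctly justify by the commutativity of $N_i$ with $Z(L)$. Your appeal to symmetry for the reverse inclusion is legitimate because Theorem \ref{uniqueLie} is symmetric in the two decompositions (as the paper itself notes in proving Corollary \ref{uniqueLieCor}); alternatively, you could have gotten it for free from $\pi_i(N_i) = M_i$ together with $m_i = n - \sum_{s\neq i} m_s \in N_i + Z(L)$. It is worth noting that your component-wise computation works verbatim at the group level using the group-theoretic mixed decomposition $\overline{G} = R_1 \times \cdots \times K_i \times \cdots \times R_m$, so the detour through the Lie algebra, while harmless, is not strictly needed.
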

\begin{proof}
It suffices to prove the proposition for $i=1$. 
Let $R = R_2 \times R_3 \times \cdots \times R_m$. 
Let $\Theta$ be the normal automorphism $\Theta_1$ whose existence is posited in Proposition
\ref{rationalUnique}, so $\Theta(R_1) = K_1$ and $\Theta$ fixes every element of $R$. 
Fix $h \in R$.
The centralizer $C_{\overline{G}}(h) = R_1 \times C_{R}(h)$.
$\Theta(R_1 \times C_{R}(h)) = K_1 \times C_{R}(h)$. 
Since $\Theta$ fixes centralizers, we see that $R_1 \times C_{R}(h) = K_1 \times C_{R}(h)$,
and hence that $K_1 \leq R_1 \times C_{R}(h)$.
Since this holds for all $h \in R$, we get
$$ K_1 \leq \cap_{h \in R} (R_1 \times C_{R}(h)) = R_1 \times \cap_{h \in R} C_{R}(h)
= R_1 \times Z(R),$$
so $K_1 \leq R_1 Z(\overline{G})$. 

By considering the inverse of $\Theta$, we see that
$R_1 \leq K_1 Z(\overline{G})$, so our result now follows. 
\end{proof}

The following corollary establishes that if $G = G_1 \times G_2$
we can use a decomposition of $\overline{G}$ to make a finite list 
of decompositions of $\overline{G}$ as $\overline{G} = R_1 \times R_2$
and trust that at least one of those decompositions matches the 
decomposition $G = G_1 \times G_2$.
\begin{cor} 
\label{inclusionsCor}
Let $G$ be a finitely generated torsion-free nilpotent group. 
Suppose that $\overline{G} = S_1 \times S_2 \times \cdots \times S_m$ is a decompostion
of $\overline{G}$ into nontrivial rational subgroups each of which is 
rationally indecomposable.
Suppose furthermore that $G = G_1 \times G_2$ is a nontrivial decomposition of $G$.
Then it is possible to reorder the $S_i$'s and to choose $j$ such that 
if $R_1 = S_1 \times S_2 \times \cdots \times S_j$ and $R_2 = S_{j+1} \times \cdots \times S_m$,
then the decomposition $\overline{G} = R_1 \times R_2$ matches the 
decomposition $G = G_1 \times G_2$.
\end{cor}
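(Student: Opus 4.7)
The plan is to refine the given decomposition $G = G_1 \times G_2$ on the rational side into rationally indecomposable pieces, and then invoke Proposition \ref{first} to align that refinement with the given $S_i$-decomposition up to the center.

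First I would apply Proposition \ref{easyDirection} to obtain $\overline{G} = \overline{G_1} \times \overline{G_2}$. Each rational factor $\overline{G_k}$ can itself be further decomposed into nontrivial rationally indecomposable rational subgroups; say $\overline{G_1} = T_1 \times \cdots \times T_j$ and $\overline{G_2} = T_{j+1} \times \cdots \times T_n$. Splicing these yields a second decomposition $\overline{G} = T_1 \times \cdots \times T_n$ of $\overline{G}$ into nontrivial rationally indecomposable pieces, with the convenient feature that the first $j$ factors span $\overline{G_1}$ and the last $n-j$ span $\overline{G_2}$.

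Next I would apply Proposition \ref{rationalUnique} to the two decompositions $\overline{G} = S_1 \times \cdots \times S_m$ and $\overline{G} = T_1 \times \cdots \times T_n$. This forces $m = n$ and, after an appropriate permutation of the $S_i$'s, the factors match pairwise in the sense of the proposition. Proposition \ref{first} then gives $S_i Z(\overline{G}) = T_i Z(\overline{G})$ for every index $i$. Setting $R_1 = S_1 \times \cdots \times S_j$ and $R_2 = S_{j+1} \times \cdots \times S_m$, the matching condition follows by multiplying:
\[
R_1 Z(\overline{G}) \;=\; (T_1 \cdots T_j)\, Z(\overline{G}) \;=\; \overline{G_1}\, Z(\overline{G}),
\]
and similarly $R_2 Z(\overline{G}) = \overline{G_2} Z(\overline{G})$.

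The only real subtlety is bookkeeping: the permutation of the $S_i$'s supplied by Proposition \ref{rationalUnique} need not respect the block structure ``first $j$ indices versus last $m-j$ indices'' imposed by the $T_i$'s coming from $\overline{G_1}$ versus $\overline{G_2}$. A harmless further renumbering of the $S_i$'s, collecting first those matched to $T_1, \ldots, T_j$, repairs this without disturbing the correspondence. No deeper obstacle arises; the corollary is essentially a formal consequence of Propositions \ref{easyDirection}, \ref{rationalUnique}, and \ref{first}.
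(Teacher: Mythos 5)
Your proposal is correct and follows essentially the same route as the paper: decompose $\overline{G_1}$ and $\overline{G_2}$ into rationally indecomposable factors $T_1,\ldots,T_j$ and $T_{j+1},\ldots,T_m$, invoke Proposition \ref{first} to reorder the $S_i$'s so that $S_i Z(\overline{G}) = T_i Z(\overline{G})$ for all $i$, and then group the first $j$ and the remaining factors. The bookkeeping point you raise about the permutation respecting the block structure is handled in the paper by exactly the renumbering you describe.
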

\begin{proof}
We can decompose $\overline{G_1}$ and $\overline{G_2}$ into nontrivial rationally indecomposable subgroups
as follows:
\begin{eqnarray*}
\overline{G_1} & = & T_1 \times \cdots \times T_j, \\
\overline{G_2} &  = & T_{j+1} \times \cdots \times T_m.
\end{eqnarray*}
By Proposition \ref{first} we can reorder the $S_i$'s such that for all $i$, 
$S_i Z(\overline{G}) = T_i Z(\overline{G})$.
Let $R_1 = S_1 \times S_2 \times \cdots S_j$ and $R_2 = S_{j+1} \times \cdots \times S_m$.
Then for $i=1,2$, $R_i Z(\overline{G}) = \overline{G_i} Z(\overline{G})$.
\end{proof}

We will rely repeatedly on the following obvious fact about the center of a direct product. 
\begin{lemma}
\label{centerDecomp}
Let $H$ be any group. 
Suppose that $H = H_1 \times H_2$. 
Then $Z(H) = Z(H_1) \times Z(H_2)$.
\end{lemma}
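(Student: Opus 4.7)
The plan is to prove both inclusions directly from the definition of the center, exploiting the fact that in a direct product elements of different factors commute automatically.

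For the inclusion $Z(H_1) \times Z(H_2) \subseteq Z(H)$, I would take arbitrary $z_1 \in Z(H_1)$ and $z_2 \in Z(H_2)$, and show that $z_1 z_2$ commutes with every $h = h_1 h_2 \in H_1 \times H_2$. Since $[H_1,H_2]=1$, one can freely interchange factors across the product, and by centrality $z_i$ commutes with $h_i$ inside $H_i$; so $z_1 z_2 \cdot h_1 h_2 = z_1 h_1 \cdot z_2 h_2 = h_1 z_1 \cdot h_2 z_2 = h_1 h_2 \cdot z_1 z_2$.

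For the reverse inclusion $Z(H) \subseteq Z(H_1) \times Z(H_2)$, I would take $z \in Z(H)$ and use the direct product structure to write uniquely $z = z_1 z_2$ with $z_i \in H_i$. To see $z_1 \in Z(H_1)$, pick any $h_1 \in H_1$; then $z_1 z_2 h_1 = z h_1 = h_1 z = h_1 z_1 z_2$, and cancelling the commuting $z_2$ (which lies in $H_2$ and so commutes with $h_1 \in H_1$) from the right gives $z_1 h_1 = h_1 z_1$. The argument for $z_2 \in Z(H_2)$ is symmetric, and uniqueness of the expression $z = z_1 z_2$ shows the decomposition is an internal direct product.

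There is essentially no obstacle here: the lemma is a purely formal consequence of the definition of direct product and of the center, and the two paragraphs above constitute the entire argument. The only point worth flagging is the implicit use of the internal-direct-product identification $H_1 \times H_2 \cong H$, which makes the decomposition $z = z_1 z_2$ unique and lets the projections onto $Z(H_i)$ be well-defined.
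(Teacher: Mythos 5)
Your proof is correct and follows essentially the same route as the paper: write a central element as $z = z_1 z_2$ with $z_i \in H_i$, use that the two factors commute to cancel one component, and conclude each $z_i$ is central in its own factor. The paper's proof is just a slightly terser version of your second paragraph (it leaves the easy inclusion implicit), so there is nothing to add.
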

\begin{proof}
Let $z \in Z(G)$. Let $g_i \in H_i$ such that $z = g_1 g_2$. 
Let $h_1 \in H_1$. Then $z h_1 = (g_1 g_2) h_1 = (g_1 h_1) g_2$.
On the other hand, $h_1 z = (h_1 g_1) g_2$.
Therefore $h_1 g_1 = g_1 h_1$. Hence $g_1 \in Z(H_1)$. 
Likewise $g_2 \in Z(H_2)$. Hence $Z(G) = Z(H_1) Z(H_2) = Z(H_1) \times Z(H_2)$. 
\end{proof}

\begin{prop}
\label{XProp}
Let $G$ be a finitely generated torsion-free nilpotent group. 
Suppose that decomposition $\overline{G} = R_1 \times R_2$ matches decomposition $G = G_1 \times G_2$. 
Let $X_i = R_i Z(\overline{G}) \cap G$. 
Then $X_i = G_i Z(G)$.
\end{prop}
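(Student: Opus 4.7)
The plan is to prove the equality by showing both inclusions for, say, $i=1$. The easy inclusion $G_1 Z(G) \subseteq R_1 Z(\overline{G}) \cap G$ follows at once from the matching hypothesis $R_1 Z(\overline{G}) = \overline{G_1} Z(\overline{G})$ together with the observation that $Z(G) \subseteq Z(\overline{G})$. The inclusion $Z(G) \subseteq Z(\overline{G})$ uses Proposition \ref{logExpComm}: any $z \in Z(G)$ commutes with every $h^\alpha \in G$ (for $\alpha$ chosen so $h^\alpha \in G$), hence $\log(z)$ commutes with $\alpha \log(h)$ and therefore with $\log(h)$, so $z$ commutes with $h$.

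For the reverse inclusion, the key preparation is to combine the hypothesis $G = G_1 \times G_2$ with Proposition \ref{easyDirection} and Lemma \ref{centerDecomp}. This gives $\overline{G} = \overline{G_1} \times \overline{G_2}$ and
\begin{displaymath}
Z(\overline{G}) \;=\; Z(\overline{G_1}) \times Z(\overline{G_2}).
\end{displaymath}
Using matching and absorbing the $\overline{G_1}$-component of the center, I get the useful identity
\begin{displaymath}
R_1 Z(\overline{G}) \;=\; \overline{G_1}\, Z(\overline{G}) \;=\; \overline{G_1}\, Z(\overline{G_2}).
\end{displaymath}

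Now take any $x \in R_1 Z(\overline{G}) \cap G$. On one hand, write $x = ab$ with $a \in \overline{G_1}$ and $b \in Z(\overline{G_2}) \subseteq \overline{G_2}$. On the other hand, since $x \in G = G_1 \times G_2$, write $x = g_1 g_2$ with $g_i \in G_i \subseteq \overline{G_i}$. Because $\overline{G} = \overline{G_1} \times \overline{G_2}$ is a \emph{direct} product, the two factorizations coincide, forcing $g_1 = a$ and $g_2 = b \in Z(\overline{G_2})$. It remains to show $g_2 \in Z(G)$: it centralizes $G_2$ because $g_2 \in Z(\overline{G_2}) \supseteq G_2$, and it centralizes $G_1$ because $G_1$ and $G_2$ already commute in the direct decomposition of $G$; hence $g_2 \in Z(G_1) \cdot Z(G_2)$ centralizes $G_1 G_2 = G$. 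Therefore $x = g_1 g_2 \in G_1 Z(G)$, completing the proof.

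The only real subtlety is the rewriting step $R_1 Z(\overline{G}) = \overline{G_1} Z(\overline{G_2})$; everything else is bookkeeping from the two direct decompositions of $\overline{G}$. I do not expect a serious obstacle, since once the matching hypothesis is unpacked into this form, the uniqueness of direct factorizations in $\overline{G_1} \times \overline{G_2}$ immediately localizes the central part of $x$ into the $\overline{G_2}$-factor.
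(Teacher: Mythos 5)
Your proof is correct and follows essentially the same route as the paper: rewrite $R_1Z(\overline G)$ as $\overline{G_1}\,Z(\overline{G_2})$ using Proposition \ref{easyDirection} and Lemma \ref{centerDecomp}, then compare the two factorizations of $x$ in the direct product $\overline{G_1}\times\overline{G_2}$ to localize the central part in $\overline{G_2}$. (Minor slip: you wrote $Z(\overline{G_2})\supseteq G_2$ where you meant $\overline{G_2}\supseteq G_2$; the argument is unaffected.)
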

\begin{proof}
Let $x \in X_1$. Since $R_i Z(\overline{G}) = \overline{G_i}  Z(\overline{G})$, 
there exist $r_1 \in \overline{G_1}$ and $z \in Z(\overline{G})$ such that
$x = rz$. 
By Proposition \ref{easyDirection}, $\overline{G} = \overline{G_1} \times \overline{G_2}$ and hence by 
Lemma \ref{upperCentral}, 
$Z(\overline{G}) = Z(\overline{G_1}) \times Z(\overline{G_2})$. 
Therefore, there exist $z_i \in Z(\overline{G_i})$ such that $z = z_1 z_2$. 
Since $x \in G$, there exist $g_i \in G_i$ such that $x = g_1 g_2$.
Thus we have two ways of expressing $x$ as an element of $\overline{G_1} \times \overline{G_2}$:
$x = (r_1 z_1) z_2 = g_1 g_2$. Therefore, $r_1 z_1 = g_1$ and $z_2 = g_2$. 
Therefore, $r_1 z_1 \in G_1$ and $z_2 \in Z(G)$, so $x \in G_1 Z(G)$.
We have shown that $X_1 \subseteq G_1 Z(G)$. 
The other inclusion is clear since by Theorem \ref{upperCentral}
$Z(G) \subseteq Z(\overline{G})$. 
\end{proof}

\begin{cor}
\label{XCor}
Let $G$ be a finitely generated torsion-free nilpotent group. 
Suppose that decomposition $\overline{G} = R_1 \times R_2$ matches decomposition
$G = G_1 \times G_2$.
Then $\overline{G} = R_1 \times R_2$ gives rise to a decomposition of $G/Z(G)$. 
\end{cor}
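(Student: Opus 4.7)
The plan is to apply Proposition \ref{XProp} to replace $X_i$ with the more concrete group $G_i Z(G)$, and then verify the three conditions in the definition of ``gives rise to a decomposition of $G/Z(G)$'' by direct computation using the decomposition $G = G_1 \times G_2$ and Lemma \ref{centerDecomp}.

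First I would invoke Proposition \ref{XProp} to rewrite $X_1 = G_1 Z(G)$ and $X_2 = G_2 Z(G)$. The equality $G = X_1 X_2$ is then immediate: any $g \in G$ has the form $g_1 g_2$ with $g_i \in G_i \subseteq X_i$, and the reverse inclusion is trivial. Likewise $[X_1,X_2] = 1$ is immediate, since $G_1$ and $G_2$ commute by the direct product decomposition $G = G_1 \times G_2$, and $Z(G)$ commutes with everything.

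The only step requiring a short argument is $X_1 \cap X_2 = Z(G)$. The inclusion $Z(G) \subseteq X_1 \cap X_2$ is clear from $Z(G) \subseteq G_i Z(G)$. For the reverse inclusion, I would take $x \in G_1 Z(G) \cap G_2 Z(G)$ and use Lemma \ref{centerDecomp}, which gives $Z(G) = Z(G_1) \times Z(G_2)$, to write
\begin{eqnarray*}
x & = & g_1 z_1 z_2 \\
& = & g_2 z_1' z_2',
\end{eqnarray*}
with $g_i \in G_i$ and $z_i, z_i' \in Z(G_i)$. Because $G = G_1 \times G_2$ yields unique factorizations, comparing the $G_1$- and $G_2$-components forces $g_1 z_1 = z_1' \in Z(G_1)$ and $z_2 = g_2 z_2' \in Z(G_2)$. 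Hence $g_1 \in Z(G_1)$ and $g_2 \in Z(G_2)$, so $x \in Z(G_1) Z(G_2) = Z(G)$, as required.

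I do not expect any real obstacle here: the entire content of the corollary is packaged into Proposition \ref{XProp}, and the remaining verification is a one-line manipulation in each case, with the intersection condition being the mildest bookkeeping step via Lemma \ref{centerDecomp}.
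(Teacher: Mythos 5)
Your proposal is correct and follows essentially the same route as the paper: both reduce to $X_i = G_i Z(G)$ via Proposition \ref{XProp}, dispose of $G = X_1X_2$ and $[X_1,X_2]=1$ immediately, and settle the intersection condition using Lemma \ref{centerDecomp} together with uniqueness of components in $G = G_1 \times G_2$. The only cosmetic difference is that the paper first reduces the intersection claim to showing $G_1 \cap G_2 Z(G) \leq Z(G)$, whereas you verify it directly for a general element of $G_1 Z(G) \cap G_2 Z(G)$; the underlying argument is the same.
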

\begin{proof}
By Proposition \ref{XProp}, $X_i = G_i Z(G)$. 
It is obvious that $G = X_1 X_2$.
To see that $X_1 \cap X_2 = Z(G)$, it suffices to show that $G_1 \cap G_2 Z(G) \leq Z(G)$.
Suppose that $g_1 \in G_1 \cap G_2 Z(G)$. 
By Lemma \ref{centerDecomp}, $g_1 = g_2 z_1 z_2$ for some $g_2 \in G_2$
and $z_i \in Z(G_i)$. Therefore $g_1 = z_1 (g_2 z_2)$. Hence $g_1 = z_1 \in Z(G_1)$.
Hence $g_1 \in Z(G)$. 
$$[X_1, X_2] = [G_1Z(G), G_2 Z(G)] = [G_1, G_2] = 1.$$
\end{proof}

\begin{lemma}
\label{little2}
Let $G$ be a finitely generated torsion-free nilpotent group. 
Suppose that decomposition $\overline{G} = R_1 \times R_2$ matches decomposition
$G = G_1 \times G_2$.
Let $X_i = R_i Z(\overline{G}) \cap G$.
If $H_1$ and $H_2$ are subgroups of $G$ such that 
$X_i = H_i Z(G)$ and 
$Z(G) = Z(H_1) \times Z(H_2)$, 
then $G = H_1 \times H_2$.
\end{lemma}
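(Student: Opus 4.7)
The plan is to verify the three conditions characterizing an internal direct product decomposition $G = H_1 \times H_2$: namely, $G = H_1 H_2$, $[H_1, H_2] = 1$, and $H_1 \cap H_2 = 1$. The hypotheses are tailor-made to supply each of these, once we reinterpret $X_i$ using Proposition \ref{XProp} and Corollary \ref{XCor}.

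First, I would apply Proposition \ref{XProp} to rewrite $X_i = G_i Z(G)$, so the assumption $X_i = H_i Z(G)$ becomes $H_i Z(G) = G_i Z(G)$. This is the key identification: $H_i$ and $G_i$ differ only by central elements.

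Next, to see that $G = H_1 H_2$, I would start from $G = G_1 G_2$ and observe
\[
G = G_1 G_2 \subseteq (H_1 Z(G))(H_2 Z(G)) = H_1 H_2 Z(G).
\]
The hypothesis $Z(G) = Z(H_1) \times Z(H_2)$ gives $Z(G) \subseteq H_1 H_2$, so $G = H_1 H_2$. For $[H_1, H_2] = 1$, I would invoke Corollary \ref{XCor}, which tells us $[X_1, X_2] = 1$; since $H_i \leq X_i$, commutativity of $H_1$ with $H_2$ follows immediately.

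The last and most delicate step is $H_1 \cap H_2 = 1$. Take $h \in H_1 \cap H_2$. Then $h \in X_1 \cap X_2 = Z(G)$ by Corollary \ref{XCor}. Now I would use the elementary observation that $Z(G) \cap H_i \subseteq Z(H_i)$: any element of $H_i$ that commutes with all of $G$ certainly commutes with all of $H_i$. Hence $h \in Z(H_1) \cap Z(H_2)$, and the assumed direct product decomposition $Z(G) = Z(H_1) \times Z(H_2)$ forces $h = 1$. I do not anticipate a real obstacle here; the only point that requires care is tracking that membership in $Z(G)$ (rather than merely in $G$) is what licenses the passage from $Z(G) \cap H_i$ to $Z(H_i)$, and that the direct product hypothesis on $Z(G)$ is used precisely to trivialize this intersection.
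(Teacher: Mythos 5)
Your proof is correct and follows essentially the same route as the paper's: both reduce to the three internal direct product conditions, using Proposition \ref{XProp} to identify $X_i = G_i Z(G) = H_i Z(G)$, Corollary \ref{XCor} for $[X_1,X_2]=1$ and $X_1\cap X_2 = Z(G)$, and the hypothesis $Z(G)=Z(H_1)\times Z(H_2)$ to absorb the central part into $H_1H_2$ and to trivialize $H_1\cap H_2$. The arguments match step for step, so there is nothing to add.
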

\begin{proof}
By Proposition \ref{XProp}, $X_i = G_i Z(G)$ and by Corollary \ref{XCor},
 $G = X_1 X_2$, $[X_1, X_2] =1$ and $X_1 \cap X_2 = Z(G)$.

We first show that $G = H_1 H_2$. 
Let $g \in G$. Then $g = x_1 x_2$ for some $x_i \in X_i$. 
Since each $x_i \in H_i Z(G)$, 
$g = h_1 h_2 z$ for some $h_i \in H_i$ and $z\in Z(G)$. 
By assumption
there exist $z_i \in Z(G) \cap H_i$ such that $z = z_1 z_2$.
Therefore $g = h_1 h_2 z_1 z_2 =  (h_1 z_1) (h_2 z_2) \in H_1 H_2$. 

We next show that $H_1 \cap H_2 = 1$. 
Suppose that $g \in H_1 \cap H_2$. 
Then $g \in X_1 \cap X_2 = Z(G)$, and 
hence $g \in H_1 \cap Z(G)$. Since $Z(G) = Z(H_1) \times Z(H_2)$, $g \in Z(H_1)$.
Similarly $g \in Z(H_2)$.
Since $Z(H_1) \cap Z(H_2) = 1$, $g = 1$.

Finally, $$[H_1, H_2] \leq [X_1, X_2] = [G_1 Z(G), G_2 Z(G)= [G_1, G_2] = 1.$$
\end{proof}

\begin{lemma}
\label{little3}
Let $G$ be a finitely generated torsion-free nilpotent group. 
Suppose that the decomposition $\overline{G} = R_1 \times R_2$ matches decomposition
$G = G_1 \times G_2$.
Let $X_i = R_i Z(\overline{G}) \cap G$. 
Suppose that $a_1, a_2, \ldots, a_k$ are elements of $X_1$ such that 
$$a_1 Z(G), a_2 Z(G), \ldots, a_k Z(G)$$ is a consistent polycyclic generating sequence
for $X_1/Z(G)$. 
Suppose that $b_1, b_2, \ldots, b_l$ is a corresponding sequence of elements of $X_2$.
Define $\widetilde{G_1}= \langle a_1, a_2, \ldots, a_k, Z(G_1) \rangle$
and $\widetilde{G_2} = \langle b_1, b_2, \ldots, b_l, Z(G_2) \rangle$.
Then $G = \widetilde{G_1} \times \widetilde{G_2}$. 
\end{lemma}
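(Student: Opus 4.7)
The plan is to apply Lemma~\ref{little2} with $H_i = \widetilde{G_i}$, which reduces the problem to verifying its two hypotheses: that $X_i = \widetilde{G_i} Z(G)$ for $i = 1, 2$, and that $Z(G) = Z(\widetilde{G_1}) \times Z(\widetilde{G_2})$.

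The first hypothesis is essentially bookkeeping. The inclusion $\widetilde{G_i} Z(G) \subseteq X_i$ follows because the $a_j$'s lie in $X_1$ and the $b_j$'s in $X_2$ by hypothesis, $Z(G_i) \subseteq G_i \subseteq G_i Z(G) = X_i$ by Proposition~\ref{XProp}, and $Z(G) \subseteq X_i$. The reverse inclusion uses that the cosets $a_1 Z(G), \ldots, a_k Z(G)$ generate $X_1/Z(G)$ (and symmetrically for the $b_j$'s), so every element of $X_i$ can be written as a product of $a_j$'s (or $b_j$'s) times an element of $Z(G)$, which lies in $\widetilde{G_i} Z(G)$.

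For the second hypothesis I would aim to show $Z(\widetilde{G_i}) = Z(G_i)$, from which Lemma~\ref{centerDecomp} applied to $G = G_1 \times G_2$ gives $Z(G) = Z(G_1) \times Z(G_2) = Z(\widetilde{G_1}) \times Z(\widetilde{G_2})$. The inclusion $Z(G_i) \subseteq Z(\widetilde{G_i})$ is immediate, since elements of $Z(G_i) \subseteq Z(G)$ commute with all of $G$ and in particular with $\widetilde{G_i}$. For the containment $Z(\widetilde{G_i}) \subseteq Z(G)$, any $z \in Z(\widetilde{G_1})$ commutes with $\widetilde{G_1}$; and since $z \in X_1$ while $[X_1, X_2] \subseteq [G_1 Z(G), G_2 Z(G)] = [G_1, G_2] = 1$, it commutes with $X_2$; using $G = X_1 X_2 = \widetilde{G_1} Z(G) X_2 = \widetilde{G_1} X_2$ (here $Z(G) \subseteq X_2$), we conclude $z \in Z(G)$.

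The main obstacle I expect is the final step of pinning down the $Z(G_2)$-component of such a $z$ to be trivial, and symmetrically for elements of $Z(\widetilde{G_2})$. My approach would be to exploit the uniqueness of Mal'cev normal forms in the torsion-free nilpotent group $X_1/Z(G)$ --- torsion-free because $Z(G)$ is isolated in any finitely generated torsion-free nilpotent group --- together with the consistency of the polycyclic generating sequence $a_1 Z(G), \ldots, a_k Z(G)$ and the internal direct-product decomposition $X_1 = G_1 \times Z(G_2)$, obtained by combining Proposition~\ref{XProp} with Lemma~\ref{centerDecomp} (note $Z(G_1) \subseteq G_1$ and $G_1 \cap Z(G_2) = 1$ inside $G$). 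Once $Z(G) = Z(\widetilde{G_1}) \times Z(\widetilde{G_2})$ is in hand, Lemma~\ref{little2} directly yields $G = \widetilde{G_1} \times \widetilde{G_2}$.
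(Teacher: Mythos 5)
Your proposal is correct and follows essentially the same route as the paper: reduce to Lemma~\ref{little2} by checking $\widetilde{G_i}\,Z(G) = X_i$ and $Z(\widetilde{G_i}) = Z(G_i)$, the latter by first observing that an element of $Z(\widetilde{G_1})$ centralizes all of $G = \widetilde{G_1}X_2$ and then invoking uniqueness of exponents in the consistent polycyclic normal form for $X_1/Z(G)$. The ``obstacle'' you flag at the end is resolved in the paper exactly by the normal-form argument you propose: writing $z = a_1^{\alpha_1}\cdots a_k^{\alpha_k}z_1$ with $z_1 \in Z(G_1)$, centrality of $z$ forces every $\alpha_i = 0$, whence $z = z_1 \in Z(G_1)$.
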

\begin{proof}
We first notice that by Corollary \ref{XCor}, the decomposition $\overline{G} = R_1 \times R_2$
gives rise to a decomposition of $G/Z(G)$ and so by Proposition \ref{XProp},
$G_i Z(G) = X_i$.
Also notice that $\widetilde{G_i} Z(G) = G_i Z(G) = X_i$. 
By Lemma \ref{little2} it suffices to show that $Z(\widetilde{G_i}) = Z(G_i)$. 
Let $z \in Z(\widetilde{G_i})$. There exist integers $\alpha_i$ and an 
element $z_1 \in Z(G_1)$ such that
$$z = a_1^{\alpha_1} a_2^{\alpha_2} \cdots a_k^{\alpha_k} z_1.$$
This implies that $a_1^{\alpha_1} a_2^{\alpha_2} \cdots a_k^{\alpha_k} \in Z(G)$. 
But by our choice of the $a_i$'s, this in turn implies that each $\alpha_i$ is equal to $0$.
Therefore, $z = z_1$ and hence $z \in Z(G_1)$. The opposite inclusion is obvious.
\end{proof}

We now describe an algorithm to test whether a given decomposition $\overline{G} = R_1 \times R_2$
which gives rise to a decomposition of $G/Z(G)$ satisfies the further property that it
matches a decomposition 
$G = G_1 \times G_2$. If it does, then the algorithm produces a decomposition. 

\begin{thm}
\label{testingX1X2}
Let $G$ be a finitely generated torsion-free nilpotent group. 
Suppose that the decomposition $\overline{G} = R_1 \times R_2$ gives rise
to a decomposition of $G/Z(G)$. Let $X_i = R_i Z(\overline{G}) \cap G$. 
Suppose that $a_1, a_2, \ldots, a_k$ are elements of $X_1$ such that 
$$a_1 Z(G), a_2 Z(G), \ldots, a_k Z(G)$$ is a consistent polycyclic generating sequence
for $X_1/Z(G)$. 
Suppose that $b_1, b_2, \ldots, b_l$ is a corresponding sequence of elements of $X_2$.
Let $H_1 = \langle a_1, a_2, \ldots, a_k \rangle$ and
let $H_2 = \langle b_1, b_2, \ldots, b_l \rangle$. 
If the decomposition $\overline{G} = R_1 \times R_2$ matches 
a decomposition $G = G_1 \times G_2$, then 
there is a decomposition  $Z(G) = Z_1 \times Z_2$ such that 
$Z(H_1) \leq Z_1$ and $Z(H_2) \leq Z_2$.
Conversely, if
there is a decomposition $Z(G) = Z_1 \times Z_2$ such that 
$Z(H_1) \leq Z_1$ and $Z(H_2) \leq Z_2$,
then $G = G_1 \times G_2$ where $G_i = H_i Z_i$,
and $\overline{G} = R_1 \times R_2$ matches this decomposition. 
\end{thm}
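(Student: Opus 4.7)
The plan is to handle the two directions separately. For the forward direction, I would introduce the subgroups $\widetilde{G_i} = \langle a_1,\ldots,a_k, Z(G_i)\rangle$ (and analogously $\widetilde{G_2}$) from Lemma \ref{little3}; that lemma immediately yields $G = \widetilde{G_1} \times \widetilde{G_2}$, so Lemma \ref{centerDecomp} gives $Z(G) = Z(\widetilde{G_1}) \times Z(\widetilde{G_2})$. Setting $Z_i := Z(\widetilde{G_i})$, the inclusion $Z(H_i) \le Z_i$ falls out of the observation that any $z \in Z(H_i)$ commutes with $H_i$ by definition and with $Z(G_i) \subseteq Z(G)$ because $Z(G)$ is central in $G$; since $\widetilde{G_i} = \langle H_i, Z(G_i)\rangle$, such a $z$ is central in $\widetilde{G_i}$.

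For the converse, I would set $G_i := H_i Z_i$ and establish two key identities. First, $X_i = G_i Z(G)$: since $Z_i \subseteq Z(G)$ this reduces to $X_i = H_i Z(G)$, which holds because $a_1 Z(G),\ldots,a_k Z(G)$ generates $X_1/Z(G)$. Second, $Z(G_i) = Z_i$: for $g = hz$ with $h \in H_i$ and $z \in Z_i$, the element $g$ centralizes $G_i = H_i Z_i$ iff $h$ centralizes $H_i$ (since $Z_i$ is already central in $G$), iff $h \in Z(H_i) \le Z_i$, so $g \in Z_i$. Together with the assumption that $\overline{G} = R_1 \times R_2$ gives rise to a decomposition of $G/Z(G)$---which supplies $G = X_1 X_2$, $[X_1, X_2] = 1$, and $X_1 \cap X_2 = Z(G)$---this is exactly the input used in the proof of Lemma \ref{little2}, which then yields $G = G_1 \times G_2$.

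To verify the matching $\overline{G_i}\, Z(\overline{G}) = R_i Z(\overline{G})$, I would observe that $Z_i \subseteq Z(\overline{G})$ forces $\overline{G_i}\, Z(\overline{G}) = \overline{H_i}\, Z(\overline{G})$. The inclusion $\overline{H_i}\, Z(\overline{G}) \le R_i Z(\overline{G})$ follows from $H_i \le X_i \le R_i Z(\overline{G})$ and the rational closedness of the right-hand side. For the reverse, any $r \in R_i$ has some power $r^n \in R_i Z(\overline{G}) \cap G = X_i = H_i Z(G) \subseteq \overline{H_i}\, Z(\overline{G})$, and the rational closedness of $\overline{H_i}\, Z(\overline{G})$ (which holds because $\overline{H_i}$ and $Z(\overline{G})$ are rationally closed and commute) then forces $r$ itself into it.

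The main delicacy I anticipate is the calculation $Z(G_i) = Z_i$ in the converse: the hypothesis $Z(H_i) \le Z_i$ is precisely what prevents $Z(G_i)$ from being strictly larger than $Z_i$ and spoiling the required decomposition $Z(G) = Z(G_1) \times Z(G_2)$. The remaining work is largely organizational, tracking the distinction between $H_i$ and $G_i$ and invoking the prior lemmas correctly.
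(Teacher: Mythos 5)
Your proposal is correct and follows essentially the same route as the paper: the forward direction via Lemma \ref{little3} with $Z_i = Z(\widetilde{G_i})$ and the observation that $Z(H_i)$ centralizes $\langle H_i, Z(G_i)\rangle$, and the converse by setting $G_i = H_iZ_i$ and verifying the direct-product and matching conditions. The only caution is that in the converse you cannot invoke Lemma \ref{little2} as stated, since its ``matches'' hypothesis is exactly what you are trying to establish; but, as you indicate, its proof uses only $G = X_1X_2$, $[X_1,X_2]=1$, $X_1\cap X_2 = Z(G)$, $X_i = G_iZ(G)$ and $Z(G) = Z(G_1)\times Z(G_2)$, all of which you have in hand, so the argument goes through (the paper simply writes out these verifications directly).
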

\begin{proof}
Suppose that there is a decomposition $G = G_1 \times G_2$
that matches the decomposition $\overline{G} = R_1 \times R_2$.
To show that there is a decomposition  $Z(G) = Z_1 \times Z_2$ such that 
$Z(H_1) \leq Z_1$ and $Z(H_2) \leq Z_2$, we let 
$\widetilde{G_1} = \langle a_1, a_2, \ldots, a_k, Z(G_1) \rangle$
and $\widetilde{G_2} = \langle b_1, b_2, \ldots, b_l, Z(G_2) \rangle$.
Then by Lemma \ref{little3}, 
$G = \widetilde{G_1} \times \widetilde{G_2}$.
Let $Z_i = Z(\widetilde{G_i})$. 
Then $Z(G) = Z_1 \times Z_2$. 
We will show that $Z(H_i) \leq Z_i$. 
Let $z \in Z(H_i)$. Then $z \in \widetilde{G_i}$ and $z$ commutes with everything in $\widetilde{G_i}$. 
Therefore, $z \in Z_i$.

Suppose that 
there is a decomposition $Z(G) = Z_1 \times Z_2$ such that 
$Z(H_1) \leq Z_1$ and $Z(H_2) \leq Z_2$, and we 
let $G_i = H_iZ_i$. Notice that $X_i = G_i Z(G)$.
Since we are assuming that our decomposition $\overline{G} = R_1 \times R_2$
gives rise to a decomposition of $G/Z(G)$,
we know that $G = X_1 X_2$, $[X_1, X_2] =1$ and $X_1 \cap X_2 = Z(G)$,
from which it follows that $[G_1, G_2] = [X_1, X_2] = 1$. 

We now show that $G= G_1 G_2$. 
Let $g \in G$. Since $g \in X_1 X_2$, $g = h_1 z_1 h_2 z_2$ for some 
$h_i \in H_i$ and $z_i \in Z(G)$. 
Since $z_1 z_2 \in Z(G)$ there exists $z_1' \in Z_1$
and $z_2' \in Z_2$ such that $z_1 z_2 = z_1' z_2'$. 
Now $g = h_1 z_1' h_2 z_2' \in G_1 G_2$.

Finally, $G_1 \cap G_2 = 1$. 
To see this, suppose that $g \in G_1 \cap G_2$.
Then $g \in X_1 \cap X_2 = Z(G)$ so $g = z_1 z_2$ for some 
$z_i \in Z_i$. 
Therefore, $z_1 \in G_2$, so $z_1 = h_2 z_2'$ for some 
$h_2 \in H_2$ and $z_2' \in Z_2$. 
Therefore $h_2 \in Z(G)$ and hence $h_2 \in Z_2$. 
Now $z_1 \in Z_1 \cap Z_2 = 1$, so $z_1 = 1$. 
Likewise $z_2 = 1$, so $g = 1$. 
We have proven that $G = G_1 \times G_2$.

It remains only to show that $\overline{G} = R_1 \times R_2$ 
matches $G = G_1 \times G_2$, i.e. that $\overline{G_i} Z(\overline{G}) = R_i Z(\overline{G})$.
Let $x \in \overline{G_i}$. There exists a positive integer $m$ such that $x^m \in G_i$,
so $x^m \in X_i$. Hence $x^m \in R_i Z(\overline{G})$. Since $R_i$ is rational
and by Theorem \ref{upperCentral} $Z(\overline{G}) = \overline{Z(G)}$,
$R_i Z(\overline{G})$ is rational and hence $x \in R_i Z(\overline{G})$.

Conversely, suppose that $x \in R_i$. There exists a positive integer $m$ such 
that $x^m \in G$. Thus $x^m \in R_i \cap G \leq X_i = G_i Z(G)$. 
Therefore $x \in \overline{G_i Z(G)} = \overline{G_i} Z(\overline{G})$. 
We have shown that the decomposition $\overline{G} = R_1 \times R_2$ matches 
the decomposition $G = G_1 \times G_2$. 
\end{proof}

Suppose that $A$ is a finitely generated abelian group, so it is a direct product 
of finitely many cyclic groups.   We recall that a subgroup $V$ of $A$ is {\em pure} if 
$w^k \in V$ implies $w\in V$.   A pure subgroup of $A$ is a direct factor.  Also the
intersection of pure subgroups is pure.  We need the following variation on the Smith
normal form algorithm.

\begin{lem}
\label{purify}  There is an algorithm which, given 
a finitely generated free abelian group $A$ and a finitely gnerated subgroup  $V$,
finds a set of generators for the  smallest subgroup $W \geq V$ which is pure in $A$. 
The algorithm determines whether $V$ is already pure, and finds a 
splitting $A= W\times A_2$.   Also, $\rank_\Z W = \rank_\Z V$. 
\end{lem}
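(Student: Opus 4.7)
The plan is to reduce the question to the classical Smith normal form algorithm. Since $A$ is free abelian, fix (or compute) a $\Z$-basis $e_1,\ldots,e_n$ of $A$. Let $v_1,\ldots,v_k$ be the given generators of $V$, and form the $n\times k$ integer matrix $M$ whose $j$-th column expresses $v_j$ in the chosen basis. I would invoke the standard Smith normal form algorithm to compute unimodular matrices $P\in GL_n(\Z)$ and $Q\in GL_k(\Z)$ with $PMQ=S$, where
\[
S = \operatorname{diag}(d_1,d_2,\ldots,d_r,0,\ldots,0),
\]
with $d_1\mid d_2\mid\cdots\mid d_r$ positive integers and $r=\rank_\Z V$.

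Next I would transfer back to $A$ by reading off a new basis. Let $f_1,\ldots,f_n$ be the columns of $P^{-1}$; these form a new $\Z$-basis of $A$. Since $Q$ is unimodular, the columns of $MQ=P^{-1}S$ still generate $V$, and expressing them in the new basis shows that $V$ is generated by $d_1 f_1,d_2 f_2,\ldots,d_r f_r$. I would then define
\[
W = \langle f_1,f_2,\ldots,f_r\rangle \qquad\text{and}\qquad A_2 = \langle f_{r+1},\ldots,f_n\rangle.
\]
Because $W$ is a coordinate direct summand of $A$ in the basis $\{f_i\}$, it is pure in $A$, satisfies $A=W\times A_2$ and $\rank_\Z W = r = \rank_\Z V$, and clearly contains $V$.

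To confirm that $W$ is the \emph{smallest} pure subgroup containing $V$: if $U\leq A$ is any pure subgroup with $V\leq U$, then $d_i f_i\in V\leq U$ forces $f_i\in U$ by purity, so $W\leq U$. (The existence of a smallest such $W$ is also guaranteed a priori because the intersection of any finite family of pure subgroups of $A$ is pure.) The algorithm then declares $V$ already pure precisely when $d_1=d_2=\cdots=d_r=1$, a condition read directly off $S$; equivalently, when $W=V$.

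The main obstacle is essentially bookkeeping rather than substance: the lemma is a direct repackaging of Smith normal form, and the only point requiring care is that a single application of the algorithm simultaneously exhibits the minimal pure overgroup $W$ and a complementary direct factor $A_2$. This falls out from the elementary fact that any coordinate subgroup in a chosen $\Z$-basis of $A$ is pure and splits off as a direct summand, so no further work beyond invoking Smith normal form is required.
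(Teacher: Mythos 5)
Your proposal is correct and follows essentially the same route as the paper: compute the Smith normal form $PMQ=S$, read off the new basis from $P^{-1}$, take $W$ to be the span of the first $r$ new basis vectors and $A_2$ the span of the rest, and detect purity of $V$ by whether all invariant factors equal $1$. Your explicit verification that $W$ is the \emph{smallest} pure subgroup containing $V$ (via $d_i f_i \in U$ forcing $f_i \in U$ for any pure $U \supseteq V$) is in fact slightly more careful than the paper's, which asserts this minimality as clear.
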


\begin{proof}  Since we are dealing with abelian groups, it is convenient to use
additive notation.  Let $W$ be the set of all elements $w \in A$ such that $kw \in V$ for some $k \in \Z$. 
Then $W$ is the smallest pure subgroup of $A$ containing $V$.  The usual Smith normal form 
computations giving the structure of finitely generated abelian groups can be applied here.  So if $M$
is the integer matrix expressing the generators of $V$ in terms of the basis for $A$, the computation
does row and column operations to obtain a new matrix $PMQ$ in canonical form with integer invariants
$c_1,\ldots c_m$ where $1\leq c_i\in\bZ$ and $c_i | c_{i+1}$.  Here $m$ is the rank of $V$ and 
$n$ is the rank of $A$.   Using the matrix $P^{-1}$ we can
find the new basis $\{w_1,\ldots,w_n\}$ such that $V$  is generated by $\{c_1w_1,\ldots,c_mw_m\}$.
Clearly  $W = gp\{w_1,\ldots,w_m\}$ is the smallest pure subgroup containing $V$.  Notice that
$V$ itself is pure if and only if all the $c_i=1$.  Also, if we put $A_2 = gp\{w_{m+1},\ldots,w_n\}$
then we have a direct product splitting $A= W\times A_2$.
\end{proof}

The following lemma shows that the condition of Theorem \ref{testingX1X2}
is easily testable. 
\begin{lem}
\label{testingAbelSplit}  There is an algorithm which, given 
a finitely generated free abelian group $A$ and two finitely gnerated subgroups  $V_1$ and $V_2$,
determines whether or not there exists a splitting $A = A_1 \times A_2$ such that $A_i \geq V_i$.  If such 
splitting exists, the algorithm produces such a  decomposition explicitly.
\end{lem}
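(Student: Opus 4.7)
The plan is to reduce the question to checking two verifiable conditions on the pure closures of $V_1$ and $V_2$. First, I would observe that any direct factor of $A$ is pure in $A$, so in any hypothetical splitting $A = A_1 \times A_2$ with $V_i \leq A_i$ the factor $A_i$ must contain the pure closure $W_i$ of $V_i$ in $A$. Thus the problem reduces to: decide whether $A$ admits a splitting $A = A_1 \times A_2$ with $W_i \leq A_i$, where the $W_i$ are the pure subgroups computed by Lemma \ref{purify}.

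Next I would argue that such a splitting exists if and only if (i) $W_1 \cap W_2 = 0$ and (ii) $W_1 + W_2$ is pure in $A$. Necessity of (i) is immediate from $A_1 \cap A_2 = 0$. For necessity of (ii), suppose $kx \in W_1 + W_2$ for some $x \in A$ and $k \in \Z$; write $x = a_1 + a_2$ with $a_i \in A_i$ and $kx = w_1 + w_2$ with $w_i \in W_i$, and use the uniqueness of the decomposition of $kx$ in $A_1 \oplus A_2$ to conclude $ka_i = w_i \in W_i$; then purity of $W_i$ in $A$ forces $a_i \in W_i$, so $x \in W_1 + W_2$. For sufficiency, I would invoke Lemma \ref{purify} to write $A = (W_1 + W_2) \oplus C$ and then check that $A_1 := W_1$ and $A_2 := W_2 + C$ work: $A_1 + A_2 = W_1 + W_2 + C = A$, and if $w_1 = w_2 + c$ then $c = w_1 - w_2 \in (W_1 + W_2) \cap C = 0$, whence $w_1 = w_2 \in W_1 \cap W_2 = 0$.

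The algorithm is then straightforward: apply Lemma \ref{purify} to compute $W_1$ and $W_2$; compute $W_1 \cap W_2$ by a standard Hermite/Smith normal form computation to test (i); compute generators of $W_1 + W_2$ and apply Lemma \ref{purify} once more to test (ii) (purity is detected by the algorithm of that lemma, which simultaneously yields the complement $C$); finally assemble $A_1$ and $A_2$ as above and return them, or report that no splitting exists.

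The only substantive step is establishing the equivalence of splittability with (i) and (ii); everything else is routine integer linear algebra already packaged in Lemma \ref{purify}. The main pitfall to watch for is not conflating ``each $W_i$ pure'' with ``$W_1 + W_2$ pure'': purity of a sum is not automatic even when the summands are pure and intersect trivially, as the example $W_1 = \Z(1,0)$ and $W_2 = \Z(1,2)$ inside $\Z^2$ shows, which is precisely why condition (ii) must be tested separately from (i).
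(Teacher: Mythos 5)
Your proposal is correct and follows essentially the same route as the paper: pass to the pure closures $W_i$ of the $V_i$ via Lemma \ref{purify}, and characterize splittability by triviality of $W_1\cap W_2$ together with purity of $W_1+W_2$ (the paper packages the intersection condition as the rank equality $\rank_\Z(W_1+W_2)=\rank_\Z W_1+\rank_\Z W_2$, which is equivalent). Your explicit necessity argument for purity of the sum and your cautionary example $\Z(1,0)$, $\Z(1,2)\subset\Z^2$ are sound but do not change the substance of the argument.
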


\begin{proof}  Given the subgroups $V_i$,  we use the algorithm of Lemma \ref{purify} 
to compute the smallest pure subgroups
$W_i\geq V_i$.  Notice that $W_1\cap W_2 = \{0\}$ if and only if 
$V_1\cap V_2 = \{0\}$. 

Now consider the subgroup $B=gp\{W_1,W_2\}$.  
In general $B$ may not be pure in $A$.
But if there is a splitting $A = A_1 \times A_2$ such that $A_i \geq V_i$, then $A_i\geq W_i\geq V_i$, 
and hence $B$ is pure in $A$ and $\rank_\Z  B = \rank_\Z  W_1 + \rank_\Z  W_2$.  Conversely
suppose that $B$ is pure in $A$.
Then rank formula  is $\rank_\Z  B = \rank_\Z  W_1 + \rank_\Z  W_2 - \rank_\Z(W_1\cap W_2)$.
But $\rank_\Z  B = \rank_\Z  W_1 + \rank_\Z  W_2 $ implies   $W_1\cap W_2 = \{0\}$  so that $B= W_1\times W_2$.
Hence we can find the desired splitting.  Since we can test purity and compute ranks, we can determine
whether such a splitting of $A$ exists, and if so explicitly find one.
\end{proof}

Notice that in the special case when one of the rational factors is abelian,
Theorem \ref{testingX1X2} is vacuous. In this case $X_1 = G$, $X_2 = Z(G)$
and $H_2 = 1$. If we let $Z_1 = Z(G)$ and $Z_2 = 1$ we get a splitting
$Z(G) = Z_1 \times Z_2$ with $Z(H_1) \leq Z_1$ and $Z(H_2) = 1$. 
We then find that $G_1 = G$ and $G_2 = 1$, so we have proven the 
existence of the trivial decomposition for $G$, 
that is, we have proven nothing. 
However, if $G$ has a decomposition $G = G_1 \times G_2$ where neither
factor is abelian, 
then given any decomposition of $\overline{G}$ as the direct product
of rationally indecomposable groups, 
there will be a way to group the indecomposable factors to obtain
$\overline{G} = R_1 \times R_2$ such that neither $R_i$ is abelian. 
Thus, the theorems described in this section provide a test for the existence of a nontrivial
nonabelian direct factor. 
In Section \ref{abelianSection} we described a separate algorithm
for deciding if $G$ has a nontrivial abelian factor.

We can now summarize our algorithm for deciding whether or not there exist nonabelian groups 
$G_1$ and $G_2$ such that $G = G_1 \times G_2$. 
We compute a decomposition  $\overline{G} = S_1 \times S_2 \times \cdots \times S_m$ 
of $\overline{G}$ into nontrivial rational subgroups each of which is 
rationally indecomposable.
There are a finite number of ways to group the $S_i$'s to obtain 
$\overline{G} = R_1 \times R_2$ where neither $R_i$ is abelian.
We further restrict our attention to decompositions that give
rise to a decomposition of $G/Z(G)$ which we determine by
computing $X_i = R_i Z(\overline{G}) \cap G$ 
and deciding whether 
$G = X_1 X_2$, $[X_1, X_2] = 1$ and $X_1 \cap X_2 = Z(G)$.
If none of our computed decompositions $\overline{G} = R_1 \times R_2$ 
give rise to a decomposition of $G/Z(G)$, we conclude that $G$ does not 
have a decomposition into two nonabelian factors. 

Otherwise, 
for each of the computed decompositions of $\overline{G}$ that does give rise to a
decomposition of $G/Z(G)$,  we decide
whether that decomposition matches a decomposition $G = G_1 \times G_2$ as follows.
We define $H_i$ using a consistent polycyclic generating sequence for $X_i/Z(G)$ as in Theorem \ref{testingX1X2}. 
We compute $Z(H_1)$ and $Z(H_2)$. 
By Theorem \ref{testingX1X2} and Remark \ref{testingAbelSplit},
there exists a decomposition of $G$ corresponding with $\overline{G} = R_1 \times R_2$ if and only 
if $Z(H_1) \cap Z(H_2) = 1$. 
In this case we compute a decomposition $Z(G) = Z_1 \times Z_2$ with 
$Z(H_i) \leq Z_i$, and we let $G_i = H_i Z_i$. 
Then $\overline{G} = R_1 \times R_2$
matches the decomposition $G = G_1 \times G_2$. 
If, after consideration of all our computed decompositions $\overline{G} = R_1 \times R_2$ 
that give rise to decompositions of $G/Z(G)$,
we find that none matches a decomposition of $G$,
then we deduce that $G$ is cannot be decomposed as the direct
product of two nonabelian factors. 

We have now completed the proof of our main theorem. 
\begin{thm} 
\label{thm:b}
There is an algorithm to determine of an arbitrary finite presentation of 
a torsion-free nilpotent group $G$ without abelian direct factors, 
whether or not $G$ has a nontrivial direct decomposition.
If so, the algorithm expresses $G$ as $G \isom G_1 \times\ldots  \times G_n$ where 
each $G_i$ is directly indecomposable.
\end{thm}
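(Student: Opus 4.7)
The plan is to reduce Theorem \ref{thm:b} to repeatedly testing whether the given $G$ admits a nontrivial splitting into \emph{two} nonabelian direct factors, and then recursing on each factor. Since $G$ is hypothesized to have no abelian direct factor, every nontrivial decomposition $G = G_1 \times G_2$ must have both $G_i$ nonabelian; applying Theorem \ref{thm:a} to each factor returned by the recursion either confirms no abelian factor or peels one off, and the process terminates because the Hirsch length of each factor is strictly smaller than that of $G$.

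To implement the two-factor test, the first step is to compute a decomposition of the rational closure $\overline{G} = S_1 \times \cdots \times S_m$ into rationally indecomposable factors, using the algorithm cited from \cite{deGraaf:book}. By Corollary \ref{inclusionsCor}, every nontrivial decomposition $G = G_1 \times G_2$ matches, after some reordering of the $S_i$'s and a choice of split index $j$, a coarsening $\overline{G} = R_1 \times R_2$ with $R_1 = S_1 \times \cdots \times S_j$ and $R_2 = S_{j+1} \times \cdots \times S_m$. There are only finitely many such groupings, so I would enumerate all of them, discarding groupings in which some $R_i$ is abelian.

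For each surviving candidate, I would first compute $X_i = R_i Z(\overline{G}) \cap G$ by standard polycyclic computations, and test whether the candidate gives rise to a decomposition of $G/Z(G)$ by verifying $G = X_1 X_2$, $[X_1, X_2] = 1$, and $X_1 \cap X_2 = Z(G)$. If these conditions hold, I would extract consistent polycyclic generating sequences for $X_1/Z(G)$ and $X_2/Z(G)$, lift them to elements $a_1, \ldots, a_k \in X_1$ and $b_1, \ldots, b_l \in X_2$, and form $H_1 = \langle a_1, \ldots, a_k \rangle$ and $H_2 = \langle b_1, \ldots, b_l \rangle$. By Theorem \ref{testingX1X2}, the candidate matches a genuine decomposition $G = G_1 \times G_2$ if and only if the free abelian group $Z(G)$ admits a splitting $Z_1 \times Z_2$ with $Z(H_i) \leq Z_i$; this is decided effectively by Lemma \ref{testingAbelSplit}. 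In the positive case, $G_i := H_i Z_i$ yields the desired decomposition.

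Correctness of the two-factor step rests on two pillars: Corollary \ref{inclusionsCor} guarantees the enumeration is exhaustive, while Theorem \ref{testingX1X2} (together with Lemmas \ref{little2} and \ref{little3}) guarantees the testing procedure is both sound and complete for each candidate grouping. The main subtlety I anticipate is showing that when Theorem \ref{testingX1X2}'s central-splitting criterion succeeds, the specific pair $(H_1, H_2)$ built from an arbitrary consistent polycyclic lift actually produces valid direct factors — independently of the chosen generators. Lemma \ref{little3} is precisely the technical device that secures this, by showing that any polycyclic lift combined with a valid splitting of $Z(G)$ into pieces containing $Z(H_1)$ and $Z(H_2)$ must produce the decomposition $G = (H_1 Z_1) \times (H_2 Z_2)$.
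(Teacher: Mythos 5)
Your proposal is correct and follows essentially the same route as the paper: compute the rational decomposition of $\overline{G}$, enumerate the finitely many nonabelian groupings $\overline{G}=R_1\times R_2$, filter by the ``gives rise to a decomposition of $G/Z(G)$'' test, and then apply Theorem \ref{testingX1X2} together with Lemma \ref{testingAbelSplit} to decide matching, with Corollary \ref{inclusionsCor} guaranteeing exhaustiveness. Your explicit treatment of the recursion and its termination via Hirsch length only makes precise what the paper leaves implicit.
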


\section{Examples}
\label{examples}

In this section we use the examples from \cite{Baumslag:nilpDecomp} to illustrate how 
our algorithms work.
In doing so we also provide an easy proof of the theorem
in \cite{Baumslag:nilpDecomp} asserting that 
direct product decompositions of finitely generated torsion-free groups may not be unique. 

We begin by describing some examples of torsion free nilpotent groups 
which we shall denote by  $G_p$  for $p>1$ (and which are denoted $G(1,p)$
in \cite{Baumslag:nilpDecomp}). 
Let $A = \langle a,b,c \rangle$ be the free abelian group of rank 3 on the listed generators.
Then the HNN-extension 
$$B= \< A, t \mid  a^t = a b,  b^t = b c,  c^t = c \>$$
is torsion-free and nilpotent of class 3.  
Let  $F = \langle f \rangle$ be the free abelian group of rank 1 on the given generator, and put  
$K= B\times F$.   We define a subgroup $K \subset G_p  \subset \ov{K}$ by
$G_p = \langle K, s \rangle$ where $s^p = bf$.  Thus $s$ is the unique $p$-th root of $bf$ in 
$\ov{K}$.

In Lemma 3 of \cite{Baumslag:nilpDecomp} Baumslag proves that $G_p$ is not directly
decomposable. 
Here we provide a simpler proof of this lemma using our algorithm.
We begin with some simple observations about the structure of $G_p$.
Notice that neither $b^{\frac{1}{p}}$ nor $f^{\frac{1}{p}}$ is an element of $G_p$,
and yet $c^{\frac{1}{p}}$ is an element of $G_p$: an easy calculation shows
that $[t, s^{-1}]^p = c$. 
The center of $G_p$ is given by $Z(G_p) = \langle c^{\frac{1}{p}}, f\rangle.$
The derived subgroup of $G_p$ is given by $[G_p, G_p] = \langle b, c^{\frac{1}{p}} \rangle$. 
The  abelianization of $G_p$ is free abelian on $\{t,a,s\}$.

We use the algorithm of Section \ref{abelianSection} to show that $G_p$ has 
no abelian direct factor. 
The image of $Z(G_p)$ in the abelianization is generated by the image of $f$ which is equal to the image of $s^p$.  Clearly the image of $s^p$
is not primitive in the abelianization. By Lemma \ref{primitiveElement}, $G_p$ has no abelian direct factor.

We use Proposition \ref{easyDirection}
to show that $G_p$ cannot be decomposed as the direct product of nonabelian factors
by showing that $\overline{G_p}$ cannot be decomposed  as the direct product 
of nonabelian factors. 
The rational closure of $G_p$ has the following decomposition
into rationally indecomposable groups:  $\ov{G_p} = \ov{K} = \ov{B} \times \ov{F}$.
To see that $\overline{B}$ is rationally indecomposable, observe that 
the center of $B$ is the cyclic group generated
by $c$, and hence, by Theorem \ref{upperCentral}, 
the center of $\ov{B}$ is isomorphic to $\Q$.
Since every factor in a splitting has a nontrivial center, 
this shows that $\overline{B}$ is rationally indecomposable. 

Next we consider $D= G_p \times G_q$, where $p$ and $q$ are relatively prime. 
We will prove that this decomposition (as the direct product of indecomposable groups) is not unique by using the algorithm of Section \ref{abelianSection}
to find an abelian direct factor. 
We will name the generators of $G_q$ using the corresponding Greek letters 
$\alpha, \beta, \gamma, \sigma$ and $\phi$,
so, for example, $\sigma^q = \beta \phi$. 
The abelianization of $D$ is free abelian with basis 
$\{t,a,s,\tau,\alpha,\sigma\}$.
The image of $Z(D)$ in the abelianization is generated by the images of $s^p$ and $\sigma^q$.   
We can perform a Smith normal form calculation as described in Section \ref{abelianSection},
but in this case it is easy to see that 
if $l$ and $m$ are integers such that $lp+mq = 1$, 
then $\left( \begin{array}{cc} p & m \\ -q & l \end{array} \right)$
is invertible, and hence $b^{-1} s^p \sigma^{-q} \beta = f \phi^{-1}$ is a primitive element of the abelianization that is central in $D$.
Thus $f \phi^{-1}$ generates a cyclic direct factor $T$ of $D$.
We have proved that the decomposition of $D$ is not unique, even up to isomorphism. 

We use our algorithm to show that the complement $S$ to $T$ in $D$ is itself indecomposable. 
We are going consider $S$ as the quotient  of $D$ obtained by identifying $f$ and $\phi$ (that is $D/T$).
Notice that in $D$ the subgroup $T$ intersects each of $G_p$ and
$G_q$ trivially, and so the quotient $D/T = S$ is a direct product with
central amalgamation, and $S$ is isomorphic to the subgroup 
$\tilde{S}$ of 
$\overline{D}$ generated by $t, a, s, \tau, \alpha, f$ and $\beta^{1/q} f^{1/q}$. 
To simplify the notation for the rest of this section we will refer to $\tilde{S}$ as $S$, 
even though $\tilde{S}$ is not actually a subgroup of $D$, but rather it is a subgroup
of $\overline{D}$ that is isomorphic to a direct complement of $T$ in $D$. 
Notice that with this notation, the derived subgroup of $S$ is given by 
$[S,S] = \langle  b, c^{\frac{1}{p}}, \beta, \gamma^{\frac{1}{q}} \rangle$
and the center of $S$ is given by $\langle c^{\frac{1}{p}}, \gamma^{\frac{1}{q}}, f \rangle$. 

We first show that $S$ does not have an abelian factor. 
In the abelianization of $S$, the image of the center is generated by $f$,
which is also the image of $s^p$ and $\sigma^q$, and so is a $pq$'th power.
Therefore $f$ is not a primitive element of
the abelianization. Thus by Lemma \ref{primitiveElement}, $S$ does 
not have an abelian factor. 

Finally we will show that $S$ is not the direct product of two nonabelian factors. 
Note that $\overline{S}$ decomposes into rationally indecomposable factors as follows:
$\overline{S} = \overline{B_p} \times \overline{B_q} \times \overline{F}$, 
where we use $B_p$ to denote the subgroup of $S$ generated by $t, a$,
and $B_q$ to denote the subgroup of $S$ generated by $\tau, \alpha$,
so $B \cong B_p \cong B_q$. 
The first step of the algorithm demands that we
consider all ways of partitioning the given factors of 
$\overline{S}$ to obtain $\overline{S} = R_1 \times R_2$,
where both $R_i$'s are rational and nonabelian. 
There are essentially two partitions here to consider which are entirely analogous. 
So it suffices look at
the case when $R_1 = \overline{B_p}$
and $R_2 = \overline{B_q} \times \overline{F}$. 

We must first decide whether $\overline{S} = R_1 \times R_2$
gives rise to a decomposition of $S/Z(S)$.
Let $X_i = R_i Z(\overline{S}) \cap S$. We find that 
\begin{eqnarray*}
X_1 & = &  \langle t, a, b^{\frac{1}{p}} f^{\frac{1}{p}}, c^{\frac{1}{p}}, \gamma^{\frac{1}{q}}, f \rangle, \\
X_2 & = & \langle \tau, \alpha,\beta^{\frac{1}{q}} f^{\frac{1}{q}}, c^{\frac{1}{p}}, \gamma^{\frac{1}{q}}, f \rangle.
\end{eqnarray*}

We now define $H_1$ and $H_2$ according to the requirements of Theorem \ref{testingX1X2}.
The images of $t, a, b^{\frac{1}{p}} f^{\frac{1}{p}}$ form a consistent polycyclic generating sequence for $X_1/Z(S)$
and the images of $\tau, \alpha, \beta^{\frac{1}{q}} f^{\frac{1}{q}}$ form a consistent polycyclic generating sequence for $X_2/Z(S)$. 
We let
\begin{eqnarray*}
H_1 & = & \langle  t, a, b^{\frac{1}{p}} f^{\frac{1}{p}} \rangle, \\
H_2 & = & \langle \tau, \alpha, \beta^{\frac{1}{q}} f^{\frac{1}{q}} \rangle.
\end{eqnarray*}
Then $f \in Z(H_1) \cap Z(H_2)$, and so by Theorem \ref{testingX1X2} and Remark \ref{testingAbelSplit}, there is no decomposition $S = G_1 \times G_2$ that matches the decomposition $\overline{S} = R_1 \times R_2$.
We have completed our proof that $S$ is indecomposable. 

\bibliographystyle{plain}
\bibliography{gbcmgoBib}

\begin{thebibliography}{10}

\bibitem{Baumslag:nilpDecomp}
Gilbert Baumslag.
\newblock Direct decompositions of finitely generated torsion-free nilpotent
  groups.
\newblock {\em Math. Z.}, 145(1):1--10, 1975.

\bibitem{BCM1}
Gilbert Baumslag, Frank~B. Cannonito, and Charles~F. Miller, III.
\newblock Infinitely generated subgroups of finitely presented groups. {I}.
\newblock {\em Math. Z.}, 153(2):117--134, 1977.

\bibitem{BCM2}
Gilbert Baumslag, Frank~B. Cannonito, and Charles~F. Miller, III.
\newblock Infinitely generated subgroups of finitely presented groups. {II}.
\newblock {\em Math. Z.}, 172(2):97--105, 1980.

\bibitem{BCRS}
Gilbert Baumslag, Frank~B. Cannonito, Derek J.~S. Robinson, and Dan Segal.
\newblock The algorithmic theory of polycyclic-by-finite groups.
\newblock {\em J. Algebra}, 142:118--149, 1991.

\bibitem{Gilbert:open}
Gilbert Baumslag, Alexei~G. Myasnikov, and Vladimir Shpilrain.
\newblock Open problems in combinatorial group theory. {S}econd edition.
\newblock In {\em Combinatorial and geometric group theory ({N}ew {Y}ork,
  2000/{H}oboken, {NJ}, 2001)}, volume 296 of {\em Contemp. Math.}, pages
  1--38. Amer. Math. Soc., Providence, RI, 2002.

\bibitem{deGraaf:book}
Willem~A. de~Graaf.
\newblock {\em Lie algebras: theory and algorithms}, volume~56 of {\em
  North-Holland Mathematical Library}.
\newblock North-Holland Publishing Co., Amsterdam, 2000.

\bibitem{FisherGrayHydon}
David~J. Fisher, Robert~J. Gray, and Peter~E. Hydon.
\newblock Automorphisms of real {L}ie algebras of dimension five or less.
\newblock {\em J. Phys. A}, 46(22):225204, 18, 2013.

\bibitem{GS}
Fritz Grunewald and Daniel Segal.
\newblock Some general algorithms {II}: Nilpotent groups.
\newblock {\em Ann. of Math.}, 112:585--617, 1980.

\bibitem{Kurosh:book2}
A.~G. Kurosh.
\newblock {\em The theory of groups. {V}ol. {II}}.
\newblock Chelsea Publishing Company, New York, N.Y., 1956.
\newblock Translated from the Russian and edited by K. A. Hirsch.

\bibitem{Remeslennikov:finiteQuot}
V.~N. Remeslennikov.
\newblock Conjugacy of subgroups in nilpotent groups.
\newblock {\em Algebra i Logika Sem.}, 6(2):61--76, 1967.

\bibitem{Remeslennikov:epi}
V.~N. Remeslennikov.
\newblock An algorithmic problem for nilpotent groups and rings.
\newblock {\em Sibirsk. Mat. Zh.}, 20(5):1077--1081, 1167, 1979.

\bibitem{Segal:book}
Daniel Segal.
\newblock {\em Polycyclic Groups}, volume~82 of {\em Cambridge tracts in
  mathematics}.
\newblock Cambridge University Press, New York, 1983.

\end{thebibliography}

\end{document}